\colorlet{mdtRed}{red!50!black}
\definecolor{dblue}{rgb}{0,0,.6}
\numberwithin{equation}{section}
\newtheorem{theorem}[equation]{Theorem}
\newtheorem{corollary}[equation]{Corollary}
\newtheorem{lemma}[equation]{Lemma}
\newtheorem{proposition}[equation]{Proposition}
\newtheorem*{theorem*}{Theorem}
\newtheorem*{corollary*}{Corollary}
\newtheorem*{proposition*}{Proposition}
\theoremstyle{definition}
\newtheorem{remark}[equation]{Remark}
\def\subsection{
	\refstepcounter{equation}
	\noindent {\bf \arabic{section}.\arabic{equation}.}
}
\newcommand{\Z}{\mathbb{Z}}
\newcommand{\C}{\mathbb{C}}
\newcommand{\R}{\mathbb{R}}
\newcommand{\ms}[1]{\mathscr{#1}}
\newcommand{\mb}[1]{\mathbb{#1}}
\newcommand{\mc}[1]{\mathcal{#1}}
\renewcommand{\t}[1]{\tilde{#1}}
\tikzset{
	symbol/.style={
		draw=none,
		every to/.append style={
			edge node={node [sloped, allow upside down, auto=false]{$#1$}}}
	}
}
\begin{document}
	
	\title[Nef and Effective cones of Some Quot Schemes]
	{Nef and Effective cones of Some Quot Schemes}

	\author[C. Gangopadhyay]{Chandranandan Gangopadhyay} 
	
	\address{Department of Mathematics, Shiv Nadar University, NH91, Tehsil Dadri, Greater Noida, Uttar
Pradesh 201314, India} 
	
	\email{chandranandan.g@snu.edu.in} 
	
	\author[R. Sebastian]{Ronnie Sebastian} 
	
	\address{Department of Mathematics, Indian Institute of Technology Bombay, Powai, Mumbai 400076, Maharashtra, India.} 
	
	\email{ronnie@iitb.ac.in} 
	
	\subjclass[2010]{14J60,14D20}
	
	\keywords{Quot Scheme}

	\begin{abstract}
	 Let $C$ be a smooth projective curve over $\C$
	 of genus $g(C)\geqslant 3$ (respectively, $g(C)=2$). Fix integers $r,k$ 
	 such that $2\leqslant k\leqslant r-2$, 
	 (respectively, $3\leqslant k\leqslant r-2$). 
	 Let $\mc Q:={\rm Quot}_{C/ \mb C}(\mc O^{\oplus r}_C, k, d)$ 
	 be the Quot scheme parametrizing rank $k$ and 
	 degree $d$ quotients of the trivial bundle of 
	 rank $r$. Let $\mc Q_L$ denote the closed subscheme 
	 of the Quot scheme parametrizing quotients such
	 that the quotient sheaf has determinant $L$. 
	 It is known that $\mc Q_L$ is an integral, 
	 normal, local complete
	 intersection, locally factorial scheme of Picard rank 2,
	 when $d\gg0$.
	 In this article we compute the nef cone, effective cone and 
	 canonical divisor of this variety when $d\gg0$. 
	 We show this variety is Fano iff $r=2k+1$. 
	\end{abstract}

	\maketitle

	\section{Introduction}
	
	Let $C$ be a smooth projective curve over the 
	field of complex numbers $\C$. Fix integers 
	$0<k<r$. Let $E$ be a locally free sheaf of rank $r$ on $C$. 
	Let ${\rm Quot}_{C/ \mb C}(E, k, d)$ 
	denote Grothendieck's Quot scheme parametrizing rank $k$ and 
	degree $d$ quotients of the bundle $E$.
	Quot schemes are
	ubiquitous in algebraic geometry. They are useful in many moduli questions, for example
	GIT constructions of moduli spaces of vector bundles and sheaves. They have interesting
	geometric, representation-theoretic and enumerative properties. 
	They admit natural generalisations; for example, moduli spaces of stable maps to homogeneous space bundles.
	It is therefore natural to investigate some basic and 
	important questions about Quot schemes
	of bundles over curves. 
	
	Let ${\rm Quot}_{C/ \mb C}(\mc O^{\oplus r}_C, k, d)$ 
	denote the Quot scheme parametrizing rank $k$ and 
	degree $d$ quotients of the trivial bundle of 
	rank $r$. When $C\cong \mb P^1$, Stromme, in \cite{Str}, 
	proved that ${\rm Quot}_{\mb P^1/ \mb C}(\mc O^{\oplus r}_C, k, d)$
	is a smooth projective variety of Picard rank 
	$2$ and computed its nef cone.  In \cite{Jow}, the author computed 
	the effective cone of ${\rm Quot}_{\mb P^1/ \mb C}(\mc O_{\mb P^1}^{\oplus n},k,d)$. 
	In \cite{Ve}, the author determined the movable
	cone of ${\rm Quot}_{\mb P^1/ \mb C}(\mc O_{\mb P^1}^{\oplus n},k,d)$ and 
	the stable base locus decomposition of the effective cone.
	In \cite{Ito}, the author studied the birational geometry 
	of ${\rm Quot}_{\mb P^1/ \mb C}(\mc O_{\mb P^1}^{\oplus n},k,d)$.
	
	For curves of higher genus, the space ${\rm Quot}_{C/ \mb C}(\mc O^{\oplus r}_C, k, d)$ 
	was studied in \cite{BDW} and it was proved 
	that when $d\gg0$, this Quot scheme is irreducible 
	and generically smooth. This was generalized by Popa 
	and Roth \cite{PR}, who proved the same result for 
	${\rm Quot}_{C/ \mb C}(E, k, d)$, the Quot 
	scheme parametrizing quotients of a 
	vector bundle $E$. See also \cite{Goller},
	\cite{CCH-Isotropic}, \cite{CCH-Lagrangian}, \cite{RS24} 
	for similar results on other variations of this Quot scheme. 
	In \cite{Ras24}, the author generalizes the above mentioned
	result of Popa and Roth to the case of nodal curves. 
	
	Following the work of \cite{PR}, it is natural to ask 
	if the results of \cite{Str}, \cite{Jow}, \cite{Ve} and \cite{Ito} 
	can be generalized to the higher genus case. A more basic question  
	is to know if these Quot schemes are integral, and if one can compute
	their divisor class group and their Picard group. 
	Motivated by these questions, 
	in \cite{gs22}, the authors prove that for $d\gg 0$, 
	${\rm Quot}_{C/ \mb C}(E, k, d)$ is integral, local complete intersection, 
	normal and locally factorial scheme
	and compute its Picard group. 
	Consider the determinant map
	$${\rm det}:{\rm Quot}_{C/ \mb C}(E, k, d) \to {\rm Pic}^d(C)\,,$$
	which sends a closed point $[E\to F]$ to the 
	determinant line bundle $[{\rm det}(F)]$. For 
	$[L]\in {\rm Pic}^d(C)$ let us denote the fiber over 
	$[L]$ by ${\rm Quot}_{C/ \mb C}(E, k, d)_L$. By definition, 
	${\rm Quot}_{C/ \mb C}(E, k, d)_L$ parametrizes 
	quotients $[E\to F]$ such that rank of $F$ is $k$
	and determinant of $F$ is $L$. 
	With some mild assumptions on the 
	genus of $C$ and $k$, it was proved in \cite{gs22} 
	that when $d\gg 0$ the 
	scheme ${\rm Quot}_{C/ \mb C}(E, k, d)_L$ is integral, local complete intersection, 
	normal and locally factorial scheme and 
	moreover, its Picard group is isomorphic to $\Z\oplus \Z$. 
	See the next section 
	for precise statements of the main results in \cite{gs22}. 
	
	
	For ease of notation, let $\mc Q_L$ denote ${\rm Quot}_{C/ \mb C}(\mc O^{\oplus r}_C, k, d)_L$. 
	Let ${\rm N}^1(\mc Q_L)$
	denote the Neron-Severi group of $\mc Q_L$.
	Let ${\rm Nef}(\mc Q_L)\subset {\rm N}^1(\mc Q_L)\otimes_\Z\R$ denote the 
	cone of nef divisors. 
	Similarly, we have the cone of effective divisors, which we denote by 
	${\rm Eff}(\mc Q_L)$ and the 
	movable cone of divisors which we denote 
	${\rm Mov}(\mc Q_L)$.
	In this article, we 
	compute these cones, thereby 
	generalizing the results in \cite{Str}, \cite{Jow}, \cite{Ve}. 
	Note that in the case of $\mb P^1$, this 
	scheme is just ${\rm Quot}_{\mb P^1/ \mb C}(\mc O^{\oplus r}_C, k, d)$. 
	It is surprising to note the similarity between $\mc Q_L$ and 
	${\rm Quot}_{\mb P^1/ \mb C}(\mc O^{\oplus r}_C, k, d)$.
	In view of this similarity, and the results of \cite{Ito}, 
	it is natural to ask if 
	$\mc Q_L$ is a Mori dream space. 
	
	We now state the main results of this article. 
	Let $\alpha$ and $\beta_{d+g-1}$ denote the boundaries of the 
	nef cone.  
	For the definitions 
	of these line bundles,
	see the discussion before \eqref{def alpha}, 
	the discussion before Lemma \ref{def beta_M} and Remark \ref{beta_m}.
	We also
	compute the class of the canonical divisor of 
	$\mc Q_L$ in terms of $\alpha$ and $\beta_{d+g-1}$. 
	Combining this with Theorem \ref{main theorem nef} we give a necessary 
	and sufficient condition for $\mc Q_L$ to be Fano.
	This question, regarding when $\mc Q_L$ is Fano, 
	was raised by Pieter Belmans in his Blog 
	(see  \href{https://pbelmans.ncag.info/blog/2022/11/09/fortnightly-links-160/}{Fortnightly links (160)}\footnote{\href{https://pbelmans.ncag.info/blog/2022/11/09/fortnightly-links-160/}{https://pbelmans.ncag.info/blog/2022/11/09/fortnightly-links-160/}}).

	In the following Theorem, 
	for the definitions of the curves $D_1$ and $D_2$, 
	see the proofs of Proposition \ref{alpha nef} and Proposition \ref{beta nef}.

	\begin{theorem}\label{thm intro}
		Assume one of the following two holds:
		\begin{itemize}
			\item $g(C)\geqslant 3$ and $2\leqslant k\leqslant r-2$, or
			\item $g(C)=2$ and $3\leqslant k\leqslant r-2$.
		\end{itemize}
		Let $d\gg0$. Then we have the following results:
		\begin{enumerate}[label=(\Alph*)]
			\item \label{intro 1} ${\rm Pic}(\mc Q_L)$ 
			is generated by the line bundles
			$\alpha$, $\beta_{d+g-1}$. Both these are globally generated, nef but not ample.
			In particular, 
			$${\rm Nef}(\mc Q_L)=\R_{\geqslant0}\alpha + \R_{\geqslant0}\beta_{d+g-1}\,.$$
			The boundaries of the cone of effective curves are given by the classes 
			of the curves $D_1$ and $D_2$.
			\item  \label{intro 2} 
			The effective cone of $\mc Q_L$ is given by 
			$${\rm Eff}(\mc Q_L)=\R_{\geqslant0}(d(k+1)\alpha-k\beta_{d+g-1}) +
			\R_{\geqslant0}(-d(r-k-1)\alpha+(r-k)\beta_{d+g-1})\,.$$
			The cone ${\rm Mov}(\mc Q_L)={\rm Eff}(\mc Q_L)$.
			\item  \label{intro 3} 
			Let $\omega_{\mc Q_L}$ denote the canonical 
			divisor of $\mc Q_L$. Then 
			$$\omega_{\mc Q_L}=[d(r-2k-2)+r(g-1)]\alpha+(2k-r)\beta_{d+g-1}\,.$$
			Thus, $\mc Q_L$ is Fano iff $r=2k+1$. 
		\end{enumerate}
	\end{theorem}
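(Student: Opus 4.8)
Since the cited results of \cite{gs22} give $\mathrm{Pic}(\mc Q_L)\otimes_\Z\R\cong\R^2$, both $\mathrm{Nef}(\mc Q_L)$ and $\mathrm{Eff}(\mc Q_L)$ are closed two-dimensional cones, each determined by its two boundary rays, and $\mathrm{Nef}(\mc Q_L)$ is dual to the cone of curves. I would organize the whole computation around the universal sequence $0\to\mc S\to\mc O_C^{\oplus r}\to\mc F\to0$ on $\mc Q_L\times C$ (with $\mc S$ locally free of rank $r-k$, being a subsheaf of a trivial bundle on a smooth curve) and the projection $\pi:\mc Q_L\times C\to\mc Q_L$; here $\alpha$ and $\beta_{d+g-1}$ are the determinant-of-cohomology classes recalled in the indicated discussions. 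For part (A) the plan is to (i) produce two test curves $D_1,D_2\subset\mc Q_L$ as explicit one-parameter families of quotients, (ii) compute the $2\times2$ intersection matrix with entries $\alpha\cdot D_i$ and $\beta_{d+g-1}\cdot D_i$, and (iii) arrange it to be triangular, with each of $\alpha,\beta_{d+g-1}$ having degree zero on exactly one of $D_1,D_2$, so that $\{D_1,D_2\}$ and $\{\alpha,\beta_{d+g-1}\}$ become dual collections of extremal rays.

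To finish (A) I would realize $\alpha$ and $\beta_{d+g-1}$ as pullbacks of (globally generated) classes along natural morphisms from $\mc Q_L$ — a theta/determinant-of-cohomology morphism and a point-evaluation map to a Grassmannian — so both are nef; non-ampleness then follows from the two vanishings $\alpha\cdot D_i=0$ and $\beta_{d+g-1}\cdot D_j=0$ produced above. Combined with their linear independence this pins $\mathrm{Nef}(\mc Q_L)$ to the first quadrant and, dually, the Mori cone to $\R_{\geqslant0}[D_1]+\R_{\geqslant0}[D_2]$. For part (B) I would exhibit effective divisors in the two claimed classes — the natural candidates being the degeneracy loci where the quotient acquires torsion (equivalently $\mc S$ fails to be saturated) or where a fixed section of $\mc O_C^{\oplus r}$ drops rank — and compute their classes in the $\{\alpha,\beta_{d+g-1}\}$ basis by a Chern-class computation, matching $d(k+1)\alpha-k\beta_{d+g-1}$ and $-d(r-k-1)\alpha+(r-k)\beta_{d+g-1}$. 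The reverse inclusion, that no effective class lies outside this cone, I would obtain by producing moving (covering) curves dual to the two conjectured extremal rays, so that every effective divisor pairs nonnegatively with them; the equality $\mathrm{Mov}(\mc Q_L)=\mathrm{Eff}(\mc Q_L)$ then follows by checking that the two generating systems have base loci of codimension $\geqslant2$.

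For part (C) I would compute $\omega_{\mc Q_L}$ from deformation theory. At a point $[\mc O_C^{\oplus r}\twoheadrightarrow F]$ with kernel $S$ the tangent space is $\mathrm{Hom}_C(S,F)$ and obstructions lie in $\mathrm{Ext}^1_C(S,F)$, so the perfect tangent–obstruction complex is $R\pi_*\mc{H}om(\mc S,\mc F)=R\pi_*(\mc S^\vee\otimes\mc F)$; as $\mc Q_L$ is lci and, for $d\gg0$, generically smooth of the expected dimension, its dualizing sheaf is $\omega_{\mc Q_L}=\det\big(R\pi_*(\mc S^\vee\otimes\mc F)\big)^{-1}$ up to the fixed-determinant correction. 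That correction is harmless: $\mc Q_L$ is the fibre of $\det:\mc Q\to\mathrm{Pic}^d(C)$ over $[L]$, and since $\mathrm{Pic}^d(C)$ is an abelian variety its tangent space is constant, so the normal bundle of the fibre is trivial and adjunction gives $\omega_{\mc Q_L}=\omega_{\mc Q}|_{\mc Q_L}$ with no extra $c_1$. It then remains to extract the degree-one part by Grothendieck–Riemann–Roch, $\ch\big(R\pi_*(\mc S^\vee\otimes\mc F)\big)=\pi_*\big(\ch(\mc S^\vee\otimes\mc F)\,\mathrm{td}(T_\pi)\big)$ with $\mathrm{td}(T_\pi)=1+(1-g)\,p_C^*[\mathrm{pt}]$, and to rewrite the result in the $\{\alpha,\beta_{d+g-1}\}$ basis using the relations among the universal Chern classes (in particular $c_1(\mc S)+c_1(\mc F)=0$ and $\det\mc F$ fixed), yielding $\omega_{\mc Q_L}=[d(r-2k-2)+r(g-1)]\alpha+(2k-r)\beta_{d+g-1}$.

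The Fano criterion is then immediate from (A) and (C). Because $\alpha,\beta_{d+g-1}$ are nef but not ample, the ample cone is the open first quadrant $\R_{>0}\alpha+\R_{>0}\beta_{d+g-1}$, so $\mc Q_L$ is Fano iff $-\omega_{\mc Q_L}=[d(2k+2-r)-r(g-1)]\alpha+(r-2k)\beta_{d+g-1}$ has both coefficients strictly positive. Positivity of the $\beta_{d+g-1}$-coefficient forces $r-2k>0$, i.e. $r\geqslant2k+1$; positivity of the $\alpha$-coefficient for $d\gg0$ is governed by the sign of $2k+2-r$ and forces $r\leqslant2k+1$ (for $r=2k+2$ it degenerates to $-r(g-1)<0$, and for $r\geqslant2k+3$ it is negative for large $d$). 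Hence both hold for $d\gg0$ if and only if $r=2k+1$, where the coefficients equal $d-(2k+1)(g-1)>0$ and $1>0$. The step I expect to be the main obstacle is in (B): constructing covering curves sharp enough to prove the \emph{reverse} inclusion for the effective cone and the equality $\mathrm{Mov}(\mc Q_L)=\mathrm{Eff}(\mc Q_L)$; by comparison the Chern-class bookkeeping for $\omega_{\mc Q_L}$ in (C) is computationally involved but conceptually routine, and the Fano deduction above is then purely formal.
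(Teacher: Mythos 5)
Your overall skeleton---two test curves with an anti-diagonal intersection matrix against $\{\alpha,\beta_{d+g-1}\}$ for (A), moving curves in fibres of classifying maps for the reverse inclusion in (B), codimension-$2$ extension arguments for ${\rm Mov}={\rm Eff}$ and for the canonical class, and the purely formal Fano deduction---is also the paper's skeleton, and your GRR computation in (C) is a legitimate alternative to the paper's method: the paper instead restricts $\det(p_{2*}(\mc K^\vee\otimes\mc F))$ to the explicit curves $D_1$ and $D_3$ (Lemmas \ref{omega dot D_1}, \ref{omega dot D_3}) and solves a linear system, and both routes rest on the same identification of the tangent bundle on the good locus plus codimension-$2$ extension. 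However, two load-bearing steps in your plan fail as stated.

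First, in (A), the real content is nefness of $\beta_{d+g-1}$, and neither of your proposed mechanisms delivers it. There is no ``point-evaluation map to a Grassmannian'' on $\mc Q_L$: evaluation at $x\in C$ is undefined at quotients with torsion supported at $x$, so it is only a rational map, and a class extending the pullback of an ample class from the locus of definition need not be nef. The ``determinant-of-cohomology morphism'' is circular for $\beta_{d+g-1}$: to get a morphism out of $\det p_{2*}(\mc F\otimes p_C^*M)$ you need precisely the global generation you are trying to prove. (For $\alpha$ a genuine morphism does exist, but it must be constructed: the paper applies $\wedge^{r-k}$ to the universal subsheaf to land in a corank-one Quot scheme, which is a projective bundle by \cite{gs22}; see \eqref{def f} and Proposition \ref{alpha nef}.) The paper's proof of Proposition \ref{beta nef} contains the extra idea your proposal lacks: a $\C^*$-action on $\mc O_C^{\oplus r}$ degenerates any given $q\in\mc Q_L$ to a fixed point whose kernel is a direct sum of line bundles $\bigoplus_i\mc O_C(-D_i)$, whence for every $q$ there is an $M$ of degree $d+g-1$ with $h^1(C,\mc K_q\otimes M)=0$; this yields a section of $\beta_M$ nonvanishing at $q$, and Lemma \ref{def beta_M} (independence of the numerical class from $M$) converts this into nefness of the class $\beta_{d+g-1}$.

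Second, in (B), your candidate effective divisors are not divisors under the standing hypotheses $2\leqslant k\leqslant r-2$. The locus where the quotient acquires torsion of length $\ell$ (equivalently where the subsheaf is unsaturated) fibres over the torsion-free Quot scheme of degree $d-\ell$ with fibres of dimension $\ell(r-k)$, hence has codimension $k\ell\geqslant k\geqslant 2$; and the locus where a fixed section of $\mc O_C^{\oplus r}$ induces a section of $\mc F_q$ vanishing somewhere has codimension $k-1$, a divisor only when $k=2$. So your Chern-class computation would have no divisor class to compute. The generators the paper uses are the pullbacks $\pi^*\Theta$ and $\pi'^*\Theta'$ of the ample generators of ${\rm Pic}(M^s_{k,L})$ and ${\rm Pic}(M^s_{r-k,L})$ under $q\mapsto[\mc F_q]$ and $q\mapsto[\ker q]$, defined on the stable and bi-stable loci (enough, by Lemma \ref{codim Q'}); and pinning down their coordinates in the $(\alpha,\beta_{d+g-1})$-basis is not a universal Chern-class identity but uses the curves $D_3$, $D_4$ together with the Picard-group exact sequence from \cite{gs22} (Lemmas \ref{computing theta class}, \ref{computing theta class-2}). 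Your moving-curve argument for the reverse inclusion is the right one, but note it too needs the codimension estimates above to guarantee that general lines in the fibres avoid the bad loci.
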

	Statement \ref{intro 1} is proved in Theorem \ref{main theorem nef}, statement \ref{intro 2}
	is proved in Theorem \ref{main theorem effective cone} and  statement \ref{intro 3} is proved in
	Theorem \ref{main theorem canonical}.

	\section{Recollection of some results from \cite{gs22}}
	
	Let $C$ be a smooth projective curve over the 
	field of complex numbers $\C$. We shall denote the 
	genus of $C$ by $g(C)$. Throughout this article we shall
	assume that $g(C)\geqslant 2$. 
	Let $E$ be a 
	locally free sheaf on $C$ of rank $r$ and degree $e$.
	In the latter sections we will be considering only the case 
	$E=\mc O_C^{\oplus r}$, whence, $e=0$. 
	Let $k$ be an integer such that $0<k<r$.
	Throughout this article  
	\begin{equation}\label{def Q}
		{\rm Quot}_{C/\C}(E,k,d)
	\end{equation}
	will denote the Quot scheme of quotients of $E$ of rank $k$
	and degree $d$. There is a map 
	\begin{equation}\label{def det}
		{\rm det}\colon{\rm Quot}_{C/\C}(E,k,d)\to {\rm Pic}^d(C)\,,
	\end{equation}
	see \cite[equation (2.5)]{gs22}. This map sends a closed point $[E\to F]\in {\rm Quot}_{C/\C}(E,k,d)$
	to $[{\rm det}(F)]\in {\rm Pic}^d(C)$. 
	Let $L$ be a line bundle on $C$ of degree $d$ and let 
	$${\rm Quot}_{C/\C}(E,k,d)_L:={\rm det}^{-1}([L])$$
	be the scheme theoretic fiber over the point $[L]\in {\rm Pic}^d(C)$.
	We recall the main results in \cite{gs22}.

	\begin{enumerate}[label=(\Alph*)]
		\item \label{point 2}
		\cite[Theorem 3.3, Corollary 3.5]{gs22}.
		First consider the case $k=r-1$. 
		Assume $d>2g-2+e-\mu_{\rm min}(E)$.
		There is a locally free sheaf $\mc E$ on ${\rm Pic}^{e-d}(C)$ such that
		the following holds. 
		We have an isomorphism of schemes over ${\rm Pic}^{e-d}(C)$,
		$\mb P(\mc E^{\vee})\xrightarrow{\sim} {\rm Quot}_{C/\C}(E,r-1,d)$.
		In particular, under the above assumption on $d$, the space 
		${\rm Quot}_{C/\C}(E,r-1,d)$ is smooth and 
		${\rm Pic}({\rm Quot}_{C/\C}(E,r-1,d))\cong {\rm Pic}({\rm Pic}^0(C))\times \mb Z$.
	\end{enumerate}
	
	\noindent 
	Next we consider the case when $k\leqslant r-2$. There is a number $d_0(E,k)$, 
	which depends only on $E$ and $k$, such that the following statements hold. 
	Let $d\geqslant d_0(E,k)$.  
	\begin{enumerate}
		\item \cite[Theorem 6.3]{gs22} 
		Then ${\rm det}\colon{\rm Quot}_{C/\C}(E,k,d)\longrightarrow {\rm Pic}^d(C)$ is a flat map.
		Further, ${\rm Quot}_{C/\C}(E,k,d)$ is local complete intersection scheme which is an 
		integral and normal variety and is locally factorial.
		
		\item \cite[Theorem 9.1]{gs22} Let $k=1$ and $r\geqslant 3$ (the case $k=1$ and $r=2$
		is dealt with in the case $k=r-1$ above). Then 
		${\rm Pic}({\rm Quot}_{C/\C}(E,k,d))\cong {\rm Pic}({\rm Pic}^d(C))\times \Z\times \Z$.
		\item \cite[Theorem 8.7]{gs22} Let $k\geqslant 2,g(C)\geqslant 2$.
		Then ${\rm Quot}_{C/\C}(E,k,d)_L$ 
		is a local complete intersection scheme, which
		is also integral, normal and locally factorial.
		\item \label{Picard rank 2}\cite[Theorem 8.9]{gs22} Assume one of the following two holds
		\begin{itemize}
			\item $k\geqslant 2$ and $g(C)\geqslant 3$, or
			\item $k\geqslant 3$ and $g(C)=2$.
		\end{itemize}
		We have isomorphisms 
		$${\rm Pic}({\rm Quot}_{C/\C}(E,k,d)_L)\cong {\rm Pic}(M^s_{k,L})\times \Z \cong \Z \times \Z\,.$$
		\item \cite[Theorem 9.1]{gs22} Let $k=1$ and $r\geqslant 3$. Then 
		${\rm Pic}({\rm Quot}_{C/\C}(E,k,d)_L)\cong \Z\times \Z$. 
	\end{enumerate}
	
	In view of point \eqref{Picard rank 2}, it becomes a particularly interesting 
	question to investigate the nef cone of the scheme ${\rm Quot}_{C/\C}(E,k,d)_L$. The purpose 
	of this article is to investigate this question when $E$ is the trivial bundle of 
	rank $r$. Before we proceed we mention a few points from \cite{gs22} which we shall use. 
	The discussion in this paragraph assumes that $d\gg0$. 
	The ``good locus" of the Quot scheme is defined to 
	be the set of points (see \cite[Definition 4.4]{gs22})
	$${\rm Quot}_{C/\C}(E,k,d)_g:=\{[E\to F]\,\vert \, H^1(E^\vee\otimes F)=0 \}\,.$$
	Let $A$ be a locally closed subset of the 
	Quot scheme. Then the good locus of 
	$A$, denoted $A_g$ is defined to be the 
	subset $A\cap {\rm Quot}_{C/\C}(E,k,d)_g$.
	An important property of the good locus is 
	that the morphism ${\rm det}$ in equation \eqref{def det} restricted 
	to the good locus is a smooth morphism. In particular, taking $A={\rm Quot}_{C/\C}(E,k,d)_L$,
	we get the locus ${\rm Quot}_{C/\C}(E,k,d)_{g,L}$. Another subset of the Quot scheme 
	which will be used is the locus of stable quotients, that is, 
	$${\rm Quot}_{C/\C}(E,k,d)^s:=\{[E\to F]\,\vert \, F\text{ is stable} \}\,.$$
	Similarly, we define ${\rm Quot}_{C/\C}(E,k,d)^s_L$. 
	We have inclusions 
	$${\rm Quot}_{C/\C}(E,k,d)^s_L\subset {\rm Quot}_{C/\C}(E,k,d)_{g,L}\subset {\rm Quot}_{C/\C}(E,k,d)_L\,.$$
	If $Y\subset X$ is locally closed, then we denote ${\rm codim}(Y,X)={\rm dim}(X)-{\rm dim}(Y)$.
	In the proof of \cite[Theorem 8.9]{gs22}, it is proved that 
	$${\rm codim}({\rm Quot}_{C/\C}(E,k,d)_L\setminus {\rm Quot}_{C/\C}(E,k,d)^s_L,
	{\rm Quot}_{C/\C}(E,k,d)_L)\geqslant 2\,.$$

	\section{Notation}
	Fix a point $[L]\in {\rm Pic}^d(C)$.
	For the remainder of this article we impose the following conventions:
	\begin{itemize}
		\item Denote by $\mc Q:={\rm Quot}_{C/\C}(\mc O_C^{\oplus r},k,d)$ and denote by 
		$\mc Q_L:={\rm Quot}_{C/\C}(\mc O_C^{\oplus r},k,d)_L.$
		\item Assume one of the following two holds:
		\begin{itemize}
			\item[$\dagger$] $g(C)\geqslant 3$ and $2\leqslant k\leqslant r-2$, or
			\item[$\dagger$] $g(C)=2$ and $3\leqslant k\leqslant r-2$.
		\end{itemize}
		\item If $Y\subset X$ is locally closed, then we denote ${\rm codim}(Y,X)={\rm dim}(X)-{\rm dim}(Y)$.
		\item For a scheme $T$, we shall denote by 
		$p_C\colon C\times T\to C$ the projection. The projection onto the second
		factor will be denoted by $p_2$. 
		\item Let
		\begin{equation}\label{universal seq}
			0\to \mc K\to p_C^*\mc O^{\oplus r}_{C} \to \mc F\to 0
		\end{equation}
		denote the universal quotient on $C\times \mc Q$. We will abuse 
		notation and use the same notation to denote its restriction to 
		$C\times \mc Q_L$. 
		\item We shall assume that $d\gg0$. In particular, $d\geqslant d_0(E,k)$.
	\end{itemize}
	\begin{remark}\label{pull-push}
		At several places we will use the following easy consequence of cohomology
		and base change. Let $f\colon X\to Y$ be a projective morphism
		and let $\mc F$ be a coherent sheaf on $X$ which is flat over $Y$. Suppose 
		$h^1(X_y,\mc F_y)=0$ for all closed points $y\in Y$. Then $f_*(\mc F)$ is locally free. 
		Let $g\colon Y'\to Y$ be a morphism
		and consider the Cartesian square
		\[\xymatrix{
			X'\ar[r]^{g'}\ar[d]_{f'} & X\ar[d]\\
			Y'\ar[r]^g & Y
		}
		\] 
		Then the natural map $g^*f_*(\mc F)\to f'_*g'^*(\mc F)$
		is an isomorphism. 
	\end{remark}
	
	\section{Nef cone of $\mc Q_L$}
	
	Recall the universal sequence \eqref{universal seq} on $C\times \mc Q_L$. Applying $\wedge^{r-k}$ 
	to the inclusion $\mc K\subset  p_C^*\mc O^{\oplus r}_{C}$, we get an 
	inclusion, which sits in a short exact sequence  
	\begin{equation}\label{wedge r-k universal}
		0\to \wedge^{r-k}\mc K\to \wedge^{r-k}\Big(p_C^*\mc O^{\oplus r}_{C}\Big)\to \mc F'\to 0\,.
	\end{equation}
	Let $q\in \mc Q_L$ be a closed point. The restriction of the 
	map in \eqref{wedge r-k universal} to $C\times q$ is the same 
	as restricting the map \eqref{universal seq} to $C\times q$ 
	and then applying $\wedge^{r-k}$. From this it easily follows 
	that the restriction of \eqref{wedge r-k universal} to $C\times q$ 
	is an inclusion. For each point $q\in \mc Q_L$,
	the sheaf $\wedge^{r-k}\mc K_q:=\wedge^{r-k}\mc K\vert_{C\times q}\cong L^{-1}$.
	Thus, it follows that the rank and degree of $\mc F'_q$ 
	are constant. As $\mc Q_L$ is an integral scheme, 
	it follows that $\mc F'$ of \eqref{wedge r-k universal}
	is flat over $\mc Q_L$. The quotient 
	$$\wedge^{r-k}\Big(p_C^*\mc O^{\oplus r}_{C}\Big)\to \mc F'$$
	on $C\times \mc Q_L$ gives rise to a morphism
	from $\mc Q_L$ to the quot scheme  
	$${\rm Quot}_{C/\C}\Big(\wedge^{r-k}\Big(\mc O^{\oplus r}_{C}\Big), 
	\binom{r}{r-k}-1,d\Big)\,.$$ 
	Moreover, for each $q$, the cokernel $\mc F'_q$
	has determinant $L$. It follows that the image of the composite map 
	$$\mc Q_L \to {\rm Quot}_{C/\C}\Big(\wedge^{r-k}\Big(\mc O^{\oplus r}_{C}\Big), 
	\binom{r}{r-k}-1,d\Big) \stackrel{\rm det}{\to } {\rm Pic}^d(C)$$
	is, at least set theoretically, the closed point $[L]$. As $\mc Q_L$
	is an integral scheme, it follows that the image lands in the scheme 
	theoretic fiber ${\rm Quot}_{C/\C}\Big(\wedge^{r-k}\Big(\mc O^{\oplus r}_{C}\Big), 
	\binom{r}{r-k}-1,d\Big)_L$. By \ref{point 2} 
	(that is, \cite[Theorem 3.3, Corollary 3.5]{gs22}) it follows 
	that we get a map from 
	\begin{equation}\label{def f}
		f\colon \mc Q_L\to \mb P(\mc E^\vee_{L^{-1}})
	\end{equation}
	(note that $e=0$). 
	For ease of notation we denote $\mb P(\mc E^\vee_{L^{-1}})$ by $\mb P$.
	Define 
	\begin{equation}\label{def alpha}
		\alpha:=f^*\mc O_{\mb P}(1)\,.
	\end{equation}
	
	\begin{proposition}\label{alpha nef}
		The line bundle $\alpha$ is nef but not ample. 
	\end{proposition}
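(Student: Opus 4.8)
The plan is to treat the two assertions separately, and the key point for nefness is to recognize the target of $f$ for what it is. By \ref{point 2} the scheme ${\rm Quot}_{C/\C}\big(\wedge^{r-k}(\mc O_C^{\oplus r}),\binom{r}{r-k}-1,d\big)$ is the projective bundle $\mb P(\mc E^\vee)$ over ${\rm Pic}^{-d}(C)$, and the scheme-theoretic fibre of $\det$ over $[L]$ is precisely the fibre of this bundle over the single point $[L^{-1}]\in{\rm Pic}^{-d}(C)$, namely the honest projective space $\mb P=\mb P(\mc E^\vee_{L^{-1}})$. On a projective space $\mc O_{\mb P}(1)$ is very ample, hence globally generated. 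Since $\alpha=f^*\mc O_{\mb P}(1)$ by \eqref{def alpha} and global generation (and therefore nefness) is preserved under pullback along the morphism $f$ of \eqref{def f}, the bundle $\alpha$ is globally generated, and in particular nef. Equivalently, for any irreducible curve $D\subset\mc Q_L$ the projection formula gives $\alpha\cdot D=\deg\big(\mc O_{\mb P}(1)|_{f(D)}\big)\cdot\deg(D/f(D))\geqslant0$.

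To see that $\alpha$ is not ample I will exhibit an irreducible curve $D_1\subset\mc Q_L$ contracted by $f$, so that $\alpha\cdot D_1=0$. By construction $f$ sends a quotient $[\mc O_C^{\oplus r}\to F]$ with kernel $\mc K$ to the point of $\mb P$ determined by the subsheaf $\wedge^{r-k}\mc K\subset\wedge^{r-k}\mc O_C^{\oplus r}$ coming from \eqref{wedge r-k universal}; hence it suffices to produce a nonconstant family of quotients all sharing this subsheaf. For $d\gg0$ choose a locally free quotient $q_0\colon\mc O_C^{\oplus r}\to G$, with $G$ of rank $k$ and degree $d-1$, whose kernel $\bar{\mc K}:=\ker q_0$ is a subbundle of rank $r-k$, arranged (possible since $\det$ dominates ${\rm Pic}^{d-1}(C)$) so that $\det(G)=L(-p)$ for a fixed point $p\in C$. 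For each nonzero $\phi\in(\bar{\mc K}|_p)^\vee$ set
\[
\mc K_\phi:=\ker\big(\bar{\mc K}\xrightarrow{\ \phi\ }\C_p\big),\qquad F_\phi:=\mc O_C^{\oplus r}/\mc K_\phi,
\]
so that $\mc K_\phi\subset\bar{\mc K}$ has torsion quotient of length $1$ at $p$, depending only on $[\phi]$ in the $(r-k-1)$-dimensional projective space $\mb P\big((\bar{\mc K}|_p)^\vee\big)$.

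Two verifications finish the argument. First, $\det F_\phi=\det(G)\otimes\mc O_C(p)=L$ is independent of $\phi$, so the family is flat of the right numerical type and defines a morphism $\mb P\big((\bar{\mc K}|_p)^\vee\big)\to\mc Q_L$ by the universal property of the Quot scheme. Second, a local computation at $p$ (where $\bar{\mc K}$ is free) shows $\wedge^{r-k}\mc K_\phi=\det(\bar{\mc K})(-p)$, embedded in $\det(\bar{\mc K})$ by multiplication by a local parameter, independently of $\phi$; away from $p$ one has $\mc K_\phi=\bar{\mc K}$. Hence the subsheaf $\wedge^{r-k}\mc K_\phi\subset\wedge^{r-k}\mc O_C^{\oplus r}$ is the same for all $\phi$, and the whole family is contracted by $f$ to a single point. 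Since $r-k\geqslant2$, I may take $D_1$ to be a line in $\mb P\big((\bar{\mc K}|_p)^\vee\big)$; it is an irreducible curve with $\alpha\cdot D_1=0$, so $\alpha$ is not ample.

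The main obstacle is the second verification: one must check that the Hecke modification of $\bar{\mc K}$ at $p$ is invisible to $\wedge^{r-k}$, i.e. that the $\phi$-dependence of $\mc K_\phi$ disappears after passing to top exterior powers, together with the flatness needed to turn the family into a genuine morphism into $\mc Q_L$. Both statements reduce to the explicit local model at $p$, where the computation is routine; everything else is bookkeeping of ranks and degrees.
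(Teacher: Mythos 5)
Your proposal is correct and takes essentially the same approach as the paper: nefness by pulling back $\mc O_{\mb P}(1)$ along $f$, and non-ampleness via the same curve of length-one Hecke modifications of a fixed rank-$(r-k)$ kernel at a fixed point of $C$, the key point being that the top exterior power of the kernel is blind to the modification. The only difference is packaging: the paper identifies $\alpha=M^{-1}$ via the Seesaw theorem and computes the relative determinant $\wedge^{r-k}\t K_1\cong p_C^*L^{-1}$ of the family, whereas you argue pointwise that the subsheaf $\wedge^{r-k}\mc K_\phi\subset\wedge^{r-k}\mc O_C^{\oplus r}$ is independent of $\phi$, so $f$ contracts the curve --- the same computation in different form.
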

	\begin{proof}
		It is clear that $\alpha$ is nef. To show it is not ample,
		it suffices to find a curve $D_1\subset \mc Q_L$ such that 
		the restriction of $\alpha$ to $D_1$ is trivial. 
		
		We begin with describing the map $f$ in \eqref{def f} in some more detail. 
		Let $p_2\colon C\times \mc Q_L\to \mc Q_L$ denote the projection. 
		By the Seesaw Theorem, there is a line bundle $M$ on $\mc Q_L$
		such that $\wedge^{r-k}\mc K\cong p_C^*L^{-1}\otimes p_2^*M$.
		Tensoring \eqref{wedge r-k universal} with $p_C^*L$ 
		we get the following exact sequence of sheaves on $C\times \mc Q_L$
		$$0\to p_2^*M \to \Big[\wedge^{r-k}\Big(p_C^*\mc O^{\oplus r}_{C}\Big)\Big]\otimes p_C^*L\to 
		\mc F'\otimes p_C^*L\to 0\,.$$
		Applying $p_{2*}$ we get the following exact sequence of sheaves 
		on $\mc Q_L$
		$$0\to M\to H^0(C,\Big[\wedge^{r-k}\Big(\mc O^{\oplus r}_{C}\Big)\Big]\otimes L)\otimes \mc O_{\mc Q_L}
		\to p_{2*}(\mc F'\otimes p_C^*L)\to H^1(C,\mc O_C)\otimes M\to 0\,.$$
		The last term on the right is 0 as the degree of $L$ is 
		$d$, which is assumed to be very large. It follows that $p_{2*}(\mc F'\otimes p_C^*L)$ is locally free.
		Taking dual of the above sequence, we get a surjection 
		$$H^0(C,\wedge^{r-k}\Big[\wedge^{r-k}\Big(\mc O^{\oplus r}_{C}\Big)\Big]\otimes L)^\vee\otimes \mc O_{\mc Q_L}\to M^{-1}\,,$$
		which defines the map $f$ to 
		$$\mb P(H^0(C,\wedge^{r-k}\Big[\wedge^{r-k}\Big(\mc O^{\oplus r}_{C}\Big)\Big]\otimes L)^\vee)\,.$$
		It is clear that the pullback of $\mc O_{\mb P}(1)$ is $M^{-1}$. 
		Thus, $\alpha=M^{-1}$.
		
		To construct the curve $D_1$, fix a closed point $x\in C$ and 
		a subsheaf $K'\subset \mc O_C^{\oplus r}$
		with ${\rm det}(K')=L^{-1}\otimes \mc O_C(x)$. 
		Let $D_1\subset \mb P(K'_x)$ be a line in the projective
		space associated to the vector space $K'_x$. Let $\iota_x\colon D_1\to C\times D_1$
		denote the map $t\mapsto (x,t)$. 
		On $C\times D_1$ 
		we have the surjection 
		$$p_C^*K'\to \iota_{x*}\iota_x^*p_C^*K'=\iota_{x*}(K'_x\otimes \mc O_{D_1})\to \iota_{x*}(\mc O_{D_1}(1))\,.$$
		Let $\t K_1$ be the kernel of this surjection. As $\iota_{x*}(\mc O_{D_1}(1))$
		is flat over $D_1$, it follows that $\t K_1$ is flat over $D_1$. 
		We have the following commutative diagram over $C\times D_1$ 
		in which all three term sequences are exact 
		\begin{equation}\label{family D_1}
			\xymatrix{
				0\ar[r]& \t K_1\ar[r]\ar[d] & p_C^*\mc O^{\oplus r}\ar[r]\ar@{=}[d] & \mc G_1\ar[r]\ar[d] & 0\\
				0\ar[r]& p_C^*K'\ar[r]\ar[d] & p_C^*\mc O^{\oplus r}\ar[r] & p_C^*F'\ar[r] & 0\\
				&\iota_{x*}(\mc O_{D_1}(1))
			}
		\end{equation}
		It follows easily that for $t\in D_1$, the rank and degree of 
		$\mc G_{1,t}$ are independent of $t$. Thus, $\mc G_1$ is flat over $D_1$. 
		Note that 
		\begin{align*}
			\wedge^{r-k}\t K_1&\cong {\rm det}(p_C^*K')\otimes {\rm det}(\iota_{x*}(\mc O_{D_1}(1)))^{-1}\\
			&\cong  p_C^*(L^{-1}\otimes \mc O_C(x))\otimes p_C^*\mc O_C(-x)\\
			&\cong p_C^*L^{-1}\,.
		\end{align*}
		It easily follows that we get a morphism $D_1\to \mc Q_L$, which 
		is an inclusion on closed points. 
		
		As $p_C^*L^{-1}=p_C^*L^{-1}\otimes p_2^*\mc O_{D_1}$, 
		from the description of the morphism $f$, it is clear that the restriction of 
		$f^*(\mc O_{\mb P}(1))$ to $D_1$ is the trivial bundle. This shows that $\alpha$
		is not ample. 
	\end{proof}
	
	Let $\mc Q_L^s$ denote the locus of quotients $[\mc O_C^{\oplus r}\to F]$
	such that $F$ is a stable bundle. 
	To construct our second nef class, we shall first construct a 
	vector bundle on $\mc Q_L^s$. The determinant of this gives a line 
	bundle on $\mc Q_L^s$. Under our hypothesis, it follows that 
	${\rm codim}(\mc Q_L\setminus \mc Q^s_L,\mc Q_L) \geqslant 2$ (see the discussion before equation (8.10)
	in \cite{gs22}). Thus, the line bundle constructed 
	on $\mc Q^s_L$ extends uniquely to a line bundle on $\mc Q_L$. We will
	show that this line bundle is nef.

	Let $M$ be a line bundle on $C$ of degree $m$ such that 
	\begin{equation}\label{condition deg m}
		\frac{d}{k}+m>2g-2\,. 
	\end{equation}
	Recall the universal sheaf $\mc F$ from \eqref{universal seq}.
	Let $p_{2}$ denote the projection $C\times \mc Q^s_L\to \mc Q^s_L$. 
	Consider the sheaf $p_{2*}(\mc F\otimes p_{C}^*M)$ on
	$\mc Q^s_L$. For a point $q\in \mc Q^s_L$,
	we have $h^1(C,\mc F_q\otimes M)=0$ iff ${\rm hom}(\mc F_q\otimes M,\omega_C)=0$.
	Assume $\mu_{\rm min}(\mc F_q\otimes M)>\mu_{\rm max}(\omega_C)$, that is,
	if \eqref{condition deg m} holds. By \cite[Lemma 1.3.3]{HL}, 
	it follows that ${\rm hom}(\mc F_q\otimes M,\omega_C)=0$.
	Thus, if $m$ is such that this inequality holds, then it follows 
	easily, using cohomology and base change \cite[Theorem 12.11]{Ha},
	that the sheaf $p_{2*}(\mc F\otimes p_{C}^*M)$ on
	$\mc Q^s_L$ is locally free. The determinant of this locally free sheaf gives 
	a line bundle on $\mc Q^s_L$, which extends uniquely to a line bundle on 
	$\mc Q_L$. We denote this line bundle by $\beta_M$. 
	
	\begin{lemma}\label{def beta_M}
		Let $M$ and $M'$ have the same degree $m$ such that \eqref{condition deg m} holds.
		Then $\beta_M$ is numerically equivalent to $\beta_{M'}$.
	\end{lemma}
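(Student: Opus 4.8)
The plan is to reinterpret $\beta_M$ as a determinant of cohomology defined on all of $\mc Q_L$, and then to reduce the assertion to a Grothendieck--Riemann--Roch computation on curves, where the only input that varies with $M$ is the Chern character $\ch(p_C^*M)$, which depends on $M$ only through its degree $m$.

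First I would note that $\mc F\otimes p_C^*M$ is flat over $\mc Q_L$ (as $\mc F$ is the universal quotient and $p_C^*M$ is a line bundle) and that $p_2\colon C\times\mc Q_L\to\mc Q_L$ is smooth and projective. Hence $Rp_{2*}(\mc F\otimes p_C^*M)$ is a perfect complex and its determinant $\lambda_M:=\det Rp_{2*}(\mc F\otimes p_C^*M)$ is a line bundle on all of $\mc Q_L$. Over the stable locus the vanishing $h^1(C,\mc F_q\otimes M)=0$ guaranteed by \eqref{condition deg m} forces $R^1p_{2*}(\mc F\otimes p_C^*M)=0$, so on $\mc Q_L^s$ one has $\lambda_M=\det p_{2*}(\mc F\otimes p_C^*M)$, which is exactly the line bundle whose unique extension defines $\beta_M$. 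Since $\mc Q_L$ is locally factorial and $\mc Q_L\setminus\mc Q_L^s$ has codimension $\geqslant 2$, this extension is unique, so $\beta_M=\lambda_M$ as line bundles on $\mc Q_L$.

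Next, to test numerical equivalence it suffices to show $\deg_\gamma\beta_M=\deg_\gamma\beta_{M'}$ for every irreducible complete curve $\gamma\subset\mc Q_L$. I would pass to the normalization $\nu\colon\t\gamma\to\gamma\hookrightarrow\mc Q_L$ with $\t\gamma$ a smooth projective curve, write $\nu_C:=\mathrm{id}_C\times\nu\colon C\times\t\gamma\to C\times\mc Q_L$, and use that the determinant of a perfect complex commutes with base change, together with the flatness of $\mc F\otimes p_C^*M$, to get $\nu^*\beta_M=\det Rp_{2*}\big(\nu_C^*(\mc F\otimes p_C^*M)\big)$ on $\t\gamma$. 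As $C\times\t\gamma$ is smooth and $p_2\colon C\times\t\gamma\to\t\gamma$ is smooth projective, classical Grothendieck--Riemann--Roch applies on $\t\gamma$:
\[
\deg_{\t\gamma}\det Rp_{2*}\big(\nu_C^*(\mc F\otimes p_C^*M)\big)
=\deg_{\t\gamma}\Big[p_{2*}\big(\ch(\nu_C^*(\mc F\otimes p_C^*M))\cdot {\rm td}(p_C^*T_C)\big)\Big]_1 .
\]
The right-hand side depends on $M$ only through $\ch(\mc F\otimes p_C^*M)=\ch(\mc F)\cdot\ch(p_C^*M)$; and since $C$ is a curve we have $\ch(p_C^*M)=1+m\,p_C^*[\mathrm{pt}]$, which depends on $M$ only through $m=\deg M$. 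Hence the displayed degree is the same for $M$ and for $M'$, giving $\deg_\gamma\beta_M=\deg_\gamma\beta_{M'}$ for all $\gamma$ and thus $\beta_M\equiv\beta_{M'}$.

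The part needing the most care is the first step: verifying that the determinant of cohomology $\lambda_M$ really is a line bundle on the (possibly singular, but lci and locally factorial) scheme $\mc Q_L$ that agrees with the extension $\beta_M$, and that it pulls back correctly to normalizations of curves. Both follow from standard properties of determinants of perfect complexes once one knows $\mc F\otimes p_C^*M$ is $\mc Q_L$-flat and $p_2$ is smooth projective; restricting GRR to the smooth curves $\t\gamma$ avoids any use of Riemann--Roch on the singular $\mc Q_L$ itself. As an alternative route avoiding GRR, one can write $M'\cong M\otimes N$ with $N\in{\rm Pic}^0(C)$, build a line bundle on $C\times\mc Q_L^s\times T$ for a connected base $T$ (e.g.\ $T={\rm Pic}^0(C)$ with a Poincar\'e bundle) interpolating $M$ and $M'$, take the determinant of its pushforward, and invoke constancy of the fibrewise degree of a line bundle over a connected base; this yields equality of degrees on complete curves contained in $\mc Q_L^s$, after which the codimension $\geqslant2$ of the complement upgrades it to numerical equivalence on $\mc Q_L$.
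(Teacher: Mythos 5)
Your proof is correct, but it takes a genuinely different route from the paper's. The paper interpolates between $M$ and $M'$ algebraically: it takes a Poincar\'e bundle $\mc P$ on $C\times{\rm Pic}^m(C)$, pushes $\mc F\otimes p_{13}^*\mc P$ forward to $\mc Q^s_L\times {\rm Pic}^m(C)$, takes the determinant $\mc R$ of the resulting locally free sheaf, extends $\mc R$ over $\mc Q_L\times{\rm Pic}^m(C)$ (this uses local factoriality of that product), and identifies the restrictions of $\mc R$ to the slices $\mc Q_L\times[M]$ and $\mc Q_L\times[M']$ with $\beta_M$ and $\beta_{M'}$; numerical equivalence then follows from connectedness of ${\rm Pic}^m(C)$, since line bundles in a connected algebraic family are algebraically, hence numerically, equivalent. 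This is exactly the alternative route you sketch in your final sentences. Your main argument instead realizes $\beta_M$ globally as the Knudsen--Mumford determinant $\det Rp_{2*}(\mc F\otimes p_C^*M)$ --- legitimate, since $\mc F\otimes p_C^*M$ is $\mc Q_L$-flat and $p_2$ is projective, and the identification with $\beta_M$ follows from the vanishing of $R^1$ on $\mc Q^s_L$ together with uniqueness of line-bundle extensions across the codimension $\geqslant 2$ complement --- and then tests the class against an arbitrary complete curve by pulling back to its normalization and applying Grothendieck--Riemann--Roch there. What your route buys: a degree formula valid on every curve, no appeal to factoriality of $\mc Q_L\times{\rm Pic}^m(C)$, and a transparent reason why the class depends only on $m$; what it costs is heavier machinery (perfect complexes, determinant of cohomology and its base-change compatibility, GRR), whereas the paper's argument uses only cohomology-and-base-change plus connectedness and in fact yields the stronger conclusion of algebraic equivalence. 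One fine point you should make explicit: the identity $\ch(p_C^*M)=1+m\,p_C^*[{\rm pt}]$ holds in cohomology (equivalently, modulo numerical equivalence) but not in the Chow ring when $g(C)\geqslant 1$, since $c_1(M)-c_1(M')$ is a nontrivial degree-zero class; this is harmless because you only compute degrees, but you should either state that the GRR computation takes place in $H^*(C\times\tilde\gamma,\Q)$, or note that the discrepancy term $p_C^*(c_1(M)-c_1(M'))\cdot c_1(\nu_C^*\mc F)$ has degree zero by the projection formula.
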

	\begin{proof}
		Let $\mc P$ be a Poincare bundle on $C\times {\rm Pic}^m(C)$.
		Let $p_{ij}$ denote the projection maps from $C\times \mc Q^s_L\times {\rm Pic}^m(C)$. 
		Consider the sheaf $p_{23*}(\mc F\otimes p_{13}^*\mc P)$ on
		$\mc Q^s_L\times {\rm Pic}^m(C)$. For a point $(q,M)\in \mc Q^s_L\times {\rm Pic}^m(C)$,
		we have $h^1(C,\mc F_q\otimes M)=0$.
		It follows 
		easily that the sheaf $p_{23*}(\mc F\otimes p_{13}^*\mc P)$ on
		$\mc Q^s_L\times {\rm Pic}^m(C)$ is locally free. Let us denote the 
		determinant of this sheaf by $\mc R$. 
		It can be easily seen, for example, using 
		similar reasoning as in \cite[Proposition 8.1]{gs22},
		that $\mc Q_L\times {\rm Pic}^m(C)$ is locally factorial. 
		It easily follows that the line bundle $\mc R$ extends 
		uniquely to a line bundle on $\mc Q_L\times {\rm Pic}^m(C)$, which 
		we continue to denote by $\mc R$.
		
		It is also easily seen, using Remark \ref{pull-push}, that the restriction of 
		$p_{23*}(\mc F\otimes p_{13}^*\mc P)$ to $\mc Q^s_L \times [M]$
		equals $p_{2*}(\mc F\otimes p_C^*M)$. Thus, it easily follows
		that $\mc R$ restricted to $\mc Q_L\times [M]$ equals $\beta_M$.
		Similarly, it follows that $\mc R$ restricted to $\mc Q_L\times [M']$ equals $\beta_{M'}$.
		It easily follows that $\beta_M$ is numerically equivalent 
		to $\beta_{M'}$. This completes the proof of the Lemma.
	\end{proof}
	
	\begin{remark}\label{beta_m}
		In view of the above Lemma, when $m$ satisfies \eqref{condition deg m},
		we shall denote the corresponding numerical class by $\beta_m$. 
	\end{remark}
	
	It is easily checked that when $d\gg0$ then $m=d+g-1$ satisfies 
	\eqref{condition deg m}.
	\begin{proposition}\label{beta nef}
		The class $\beta_{d+g-1}$ is globally generated and hence nef. This class is not ample. 
	\end{proposition}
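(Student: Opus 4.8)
The plan is to exhibit $\beta_M$ (with $\deg M=m=d+g-1$) as the pullback of a Plücker line bundle under a morphism to a Grassmannian. First I would note that $\mc W:=p_{2*}(\mc F\otimes p_C^*M)$ is locally free of rank $d(k+1)$ on all of $\mc Q_L$, not just on $\mc Q^s_L$: for any $q$, the canonical sequence $0\to T\to\mc F_q\to V\to0$ with $T$ torsion and $V$ locally free has $V$ a quotient of $\mc O_C^{\oplus r}$, hence globally generated, so $\mu_{\min}(V)\geqslant0$; as $m>2g-2$ this gives $h^1(\mc F_q\otimes M)=0$ and Remark \ref{pull-push} applies. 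Its determinant is then a line bundle on $\mc Q_L$ agreeing with $\beta_M$ on $\mc Q^s_L$, hence equal to it. Tensoring \eqref{universal seq} by $p_C^*M$ and pushing forward yields the evaluation $\mathrm{ev}\colon H^0(M)^{\oplus r}\otimes\mc O_{\mc Q_L}\to\mc W$, whose fibre at $q$ is $H^0(M)^{\oplus r}\to H^0(\mc F_q\otimes M)$ with cokernel $H^1(\mc K_q\otimes M)$. If $\mathrm{ev}$ is surjective it defines a morphism $\mc Q_L\to\mathrm{Gr}(d(k+1),rd)$ pulling back $\mc O(1)$ to $\det\mc W=\beta_M$, so $\beta_M$ is globally generated and nef.

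The main obstacle is precisely the surjectivity of $\mathrm{ev}$, i.e. the vanishing $h^1(\mc K_q\otimes M)=0$ for every $q$. This would follow from $\mu_{\min}(\mc K_q)>g-1-d$, but $\mc K_q\subset\mc O_C^{\oplus r}$ only forces $\mu_{\max}(\mc K_q)\leqslant0$; taking $\mc K_q$ an extension of $\mc O_C(-D)$, $D\in|L|$, by $\mc O_C^{\oplus(r-k-1)}$ shows that $\mu_{\min}(\mc K_q)$ can be as low as $-d$, so $\mathrm{ev}$ genuinely fails to be surjective at special points. I expect this to be the crux.

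To prove nefness regardless, I would check $\beta_{d+g-1}\cdot D\geqslant0$ for every irreducible curve $D\subset\mc Q_L$. On $\mc Q_L\times\mathrm{Pic}^{d+g-1}(C)$ with the bundle $\mc R=\det p_{23*}(\mc F\otimes p_{13}^*\mc P)$ of Lemma \ref{def beta_M}, the sections of $\mc R$ coming from $\wedge^{d(k+1)}\mathrm{ev}$ vanish exactly on $Z=\{(q,M):h^1(\mc K_q\otimes M)\neq0\}$. Since $\chi(\mc K_q\otimes M)=d(r-k-1)>0$, generic vanishing makes each fibre $Z_q\subset\mathrm{Pic}^{d+g-1}(C)$ proper, whence for generic $[M]$ one has $D\not\subseteq Z_M$; then $\beta_M|_D$ has a section with finite vanishing and $\beta_{d+g-1}\cdot D=\beta_M\cdot D\geqslant0$. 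For the stronger assertion that $\beta_{d+g-1}$ is globally generated I would pass to theta sections: for $N$ of rank $k$ with $\deg N=k(g-1)-d$ one has $\chi(\mc F_q\otimes N)=0$, and the determinant-of-cohomology section of $\det Rp_{2*}(\mc F\otimes p_C^*N)^{-1}$ is nonzero exactly where $h^0(\mc F_q\otimes N)=0$; the work is to identify this line bundle with $\beta_{d+g-1}$ by Grothendieck–Riemann–Roch and to find, for each $q$, such an $N$ of fixed determinant avoiding $q$, again by generic vanishing.

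For non-ampleness it suffices to exhibit an irreducible curve $D_2$ with $\beta_{d+g-1}\cdot D_2=0$. I would construct $D_2$ as the mirror of the curve $D_1$ of Proposition \ref{alpha nef}: whereas $D_1$ varies the subsheaf at a point so that $\wedge^{r-k}\mc K\cong p_C^*L^{-1}$ stays constant (forcing $\alpha\cdot D_1=0$), the curve $D_2$ should be a rational family of quotients along which the subspace $H^0(\mc K_q\otimes M)\subseteq H^0(M)^{\oplus r}$ classifying the $\beta$-morphism is fixed while $\wedge^{r-k}\mc K$ moves. For such a family $\mc W|_{D_2}$ is trivial, so $\beta_{d+g-1}\cdot D_2=\deg\det\mc W|_{D_2}=0$, while $f|_{D_2}$ is nonconstant, so $\alpha\cdot D_2>0$; verifying these two intersection numbers is the one computation that must be carried out in detail. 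This shows $\beta_{d+g-1}$ is nef but not ample.
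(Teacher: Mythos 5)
Your framework is in fact the same as the paper's: the local freeness of $\mc W=p_{2*}(\mc F\otimes p_C^*M)$ on all of $\mc Q_L$ (your slope argument for it is correct), the evaluation map $\mathrm{ev}$, and the identification of its non-surjectivity locus with $\{q:h^1(\mc K_q\otimes M)\neq 0\}$ are exactly how the paper proceeds; the paper then shows that for each $q$ there is some $M$ of degree $d+g-1$ with $h^1(\mc K_q\otimes M)=0$, so that $\wedge^{\mathrm{top}}\mathrm{ev}_M$ generates $\beta_M$ near $q$, and nefness of the class follows since all $\beta_M$ are numerically equivalent by Lemma \ref{def beta_M}. But the statement you yourself flagged as the crux is precisely where your proof has a genuine gap: properness of $Z_q=\{M:h^1(\mc K_q\otimes M)\neq 0\}$ does \emph{not} follow from $\chi(\mc K_q\otimes M)=d(r-k-1)>0$. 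Positivity of the Euler characteristic never implies generic vanishing of $h^1$ by itself: for $E=A\oplus\mc O_C(-D)$ with $\deg D\gg 0$ and $\deg A$ much larger, one has $\chi(E\otimes M)\gg 0$ yet $h^1(E\otimes M)\geqslant h^1(M(-D))>0$ for \emph{every} $M$ of bounded degree. What rules this out here is the constraint $\mc K_q\subset\mc O_C^{\oplus r}$ (forcing every quotient of $\mc K_q$ to have degree $\geqslant -d$), and converting that constraint into the vanishing for general $M$ requires an actual argument. The paper supplies it with a $\mb C^*$-action on $\mc O_C^{\oplus r}$ with distinct weights: the orbit of $q$ degenerates to a fixed point whose kernel is $\bigoplus_i\mc O_C(-D_i)$ with $D_i$ effective and $\sum_i\deg D_i=d$; for general $M$ each $M(-D_i)$ is a general line bundle of degree $\geqslant g-1$, so $h^1=0$ at the fixed point, and semicontinuity plus the fact that the kernel sheaf is constant along the orbit transfers the vanishing back to $q$. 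Without this step (or an equivalent dimension count on rank-one quotients of $\mc K_q$), your nefness argument is incomplete.

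Two further points. First, your theta-section route to global generation would fail even if the vanishing were granted: for $N$ of rank $k$ with $\chi(\mc F_q\otimes N)=0$, the line bundle $\det Rp_{2*}(\mc F\otimes p_C^*N)^{-1}$ is \emph{not} numerically equivalent to $\beta_{d+g-1}$, so no Grothendieck--Riemann--Roch computation can identify them. Its restriction to the curve $D_4$ of \eqref{family D_4} is trivial (the quotient family there is $p_C^*F\otimes p_2^*\mc O_{D_4}(1)$, so the determinant of cohomology has degree $\chi(F\otimes N)=0$ along $D_4$), whereas $[\beta_{d+g-1}]\cdot[D_4]=d(k+1)$. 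Conceptually, the divisor $\{q:h^0(\mc F_q\otimes N)\neq 0\}$ depends only on $\pi(q)=[\mc F_q]$, hence is pulled back from $M^s_{k,L}$ and its class is proportional to $\pi^*\Theta=\frac{1}{\lambda_0}\bigl(d(k+1)\alpha-k\beta_{d+g-1}\bigr)$, a boundary ray of the effective cone which is not even nef (it has degree $-k/\lambda_0<0$ on $D_1$). Second, your sketch of the curve $D_2$ for non-ampleness is the right idea but is not carried out; the paper realizes it concretely as a pencil in $\mb P(H^0(C,L)^\vee)$, direct-summed with $\mc O_C^{\oplus(r-k-1)}$ in the kernel and $\mc O_C^{\oplus k}$ in the quotient, for which $p_{2*}\bigl((\mc G_2\oplus p_C^*\mc O_C^{\oplus k})\otimes p_C^*M\bigr)$ is visibly trivial while $\wedge^{r-k}$ of the kernel has degree $-1$ along $D_2$, giving $\beta_{d+g-1}\cdot D_2=0$ and $\alpha\cdot D_2=1$, consistent with your heuristic.
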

	\begin{proof}
		We will show that for any point $q\in \mc Q_L$, there 
		is a line bundle $M$ on $C$ of degree $d+g-1$, such 
		that the line bundle $\beta_M$ on $\mc Q_L$ has a global section
		which does not vanish at $q$. This will show that $\beta_{d+g-1}$
		is globally generated. As a globally generated line bundle is 
		nef, it follows that $\beta_{d+g-1}$ is nef. 
		
		Consider the action of $\mb C^*$ on $\mb C^r$ given by 
		$t\cdot (a_1,\ldots,a_r)=(a_1,ta_2,\ldots,t^{r-1}a_r)$.
		This action gives rise to an action of $\mb C^*$ on $\mc O_C^{\oplus r}$
		and so also on $\mc Q_L$. Indeed, this action sends an
		inclusion $\varphi$ to the inclusion $\varphi\circ t^{-1}$, 
		in the following commutative diagram 
		\[\xymatrix{
			K\ar[r]^{\varphi\circ t^{-1}}\ar@{=}[d] & \mc O_C^{\oplus r}\ar[r]\ar[d]^t & F'\ar[d]\\
			K\ar[r]^\varphi & \mc O_C^{\oplus r}\ar[r] & F
		}
		\]
		Thus, given any point $q\in \mc Q_L$, we may find a $\mb C^*$ equivariant 
		morphism $h\colon \mb C^*\to \mc Q_L$ such that $h(1)=q$. Note that for $t\in \C^*$, the kernel 
		sheaf in $h(t)$ is the same as the kernel sheaf in $q$. The morphism $h$
		extends to a morphism $\mb C\to \mc Q_L$ and the point $h(0)$ is fixed
		under the action of $\mb C^*$ on $\mc Q_L$. Thus, $h(0)$ is a quotient $q_0$
		whose kernel equals 
		$$K_0=\bigoplus_{i=1}^{r-k}\mc O_C(-D_i)\,,$$
		where each $D_i$ is an effective divisor of degree $d_i$, and the $d_i$ satisfy 
		$\sum_i d_i=d$. See, for example, \cite[\S3]{BGL}. Let $M$ be a general line bundle of degree $d+g-1$. 
		Then $M\otimes \mc O_C(-D_i)$ is a general line bundle of degree 
		$d-d_i+g-1\geqslant g-1$. In particular, $h^1(C,M\otimes \mc O_C(-D_i))=0$. 
		It follows that $h^1(C,M\otimes K_0)=0$ and so this vanishing holds
		for the kernels in an open set containing $q_0$. In particular, it follows
		that 
		\begin{equation}\label{h^1 vanishing general M}
			h^1(C,M\otimes K)=0\,,
		\end{equation}
		where $K$ is the kernel of the quotient $q$ 
		we started with. 
		
		Since $d\gg0$, we have $d+g-1>2g-2$ and so $h^1(C,M)=0$. 
		If $q'$ is any point in $\mc Q_L$, then the cohomology long 
		exact sequence of the short exact sequence 
		$$0\to \mc K_{q'}\otimes M \to (\mc O_C^{\oplus r})\otimes M\to \mc F_{q'}\otimes M\to 0$$
		shows that $h^1(C,\mc F_{q'}\otimes M)=0$. In particular, it follows
		that the sheaf $p_{2*}(\mc F\otimes p_C^*M)$ is locally free 
		on all of $\mc Q_L$. Applying $p_{2*}(-\otimes p_C^*M)$ to \eqref{universal seq},
		and using \eqref{h^1 vanishing general M},
		it follows that on an open set containing the point $q$,
		the map 
		$$p_{2*}(p_C^*(\mc O_C^{\oplus r})\otimes p_C^*M)\to p_{2*}(\mc F\otimes p_C^*M)$$
		is a surjection. Applying $\wedge^k$ we get that the map
		$$\wedge^k\Big(H^0(C,M)^{\oplus r}\Big)\to \beta_M$$
		is surjective on an open set containing $q$. 
		It follows that $\beta_{d+g-1}$ is globally generated and so nef.
		
		To show that $\beta_{d+g-1}$ is not ample, we will construct 
		a curve $D_2\subset \mc Q_L$ such that $[D_2]\cdot [\beta_{d+g-1}]=0$. 
		Let $D_2$ be a line in $\mb P(H^0(C,L)^\vee)$. Then on $C\times D_2$
		we have a short exact sequence 
		$$0\to p_C^*L^{-1}\otimes p_2^*\mc O_{D_2}(-1)\to p_C^*\mc O_C\to \mc G_2\to 0\,.$$
		``Adding" to this, identity maps of the type  
		$p_C^*\mc O_C^{\oplus l}\to p_C^*\mc O_C^{\oplus l}$,
		we get the quotient 
		\begin{align}\label{quotient on D_2}
			0\to \Big(p_C^*L^{-1}\otimes & p_2^*\mc O_{D_2}(-1)\Big)\bigoplus 
			\Big(p_C^*\mc O_C^{\oplus (r-k-1)}\Big)\\
			&\to p_C^*\mc O_C\bigoplus p_C^*\mc O_C^{\oplus (r-k-1)}
			\bigoplus \Big(p_C^*\mc O_C^{\oplus k}\Big)\to 
			\mc G_2\bigoplus \Big(p_C^*\mc O_C^{\oplus k}\Big)\to 0\,.\nonumber
		\end{align}
		This defines a morphism 
		$$f'\colon D_2\to \mc Q_L\,,$$ 
		which is clearly injective
		on closed points. Let $M$ be a line bundle of degree $d+g-1$ 
		such that $h^1(C,L^{-1}\otimes M)=0$. Using Riemann-Roch we get that 
		$h^0(C,L^{-1}\otimes M)=0$.
		Again, as $d\gg0$, we have $h^1(C,M)=0$.
		It easily follows that for each $t\in D_2$,  
		$$h^1(C, \Big[\mc G_{2,t}\bigoplus \Big(\mc O_C^{\oplus k}\Big)\Big]\otimes M )=0$$
		It follows easily using Remark \ref{pull-push}
		that $f'^*\beta_M={\rm det}(p_{2*}((\mc G_2\bigoplus p_C^*\mc O_C^{\oplus k})\otimes p_C^*M))$.
		We claim that $p_{2*}((\mc G_2\bigoplus p_C^*\mc O_C^{\oplus k})\otimes p_C^*M)$ is the trivial 
		bundle. To see this, note that 
		$$p_{2*}\Big[p_C^*L^{-1}\otimes p_2^*\mc O_{D_2}(-1)\otimes p_C^*M\Big]=0\,,$$
		and 
		$$R^1p_{2*}\Big[\Big\{\Big(p_C^*L^{-1}\otimes p_2^*\mc O_{D_2}(-1)\Big)\bigoplus 
		\Big(p_C^*\mc O_C^{\oplus (r-k-1)}\Big)\Big\}\otimes p_C^*M\Big]=0\,.$$
		It follows that when we apply $p_{2*}(-\otimes p_C^*M)$ to the sequence 
		\eqref{quotient on D_2}, we get the following exact sequence in which the first two terms are 
		trivial bundles, 
		$$0\to H^0(C,M)^{\oplus (r-k-1)}\otimes \mc O_{D_2}\to H^0(C,M)^{\oplus r}\otimes \mc O_{D_2} 
		\to p_{2*}((\mc G_2\bigoplus p_C^*\mc O_C^{\oplus k})\otimes p_C^*M) \to 0\,.$$
		Thus, it follows that the last bundle is also trivial, and so $f'^*\beta_{M}$ is the trivial bundle.
		This shows that  $[D_2]\cdot [\beta_{d+g-1}]=0$. 
	\end{proof}

	\begin{lemma}\label{D_2 dot alpha}
		We have $[D_2]\cdot [\alpha]=[D_1]\cdot [\beta_{d+g-1}]=1$.
	\end{lemma}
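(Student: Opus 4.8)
The plan is to compute the two intersection numbers separately; in each case I restrict the relevant line bundle to the curve in question (a line, hence $\cong\mb P^1$) and read off the degree of the restriction. Throughout I use that $f_1\colon D_1\to\mc Q_L$ (from Proposition \ref{alpha nef}) and $f'\colon D_2\to\mc Q_L$ (from Proposition \ref{beta nef}) are injective on closed points, so that $[D_i]\cdot[\mc L]$ equals the degree of the pullback of $\mc L$ along the relevant map.

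\textbf{Computation of $[D_2]\cdot[\alpha]$.} I would use the identification $\alpha=M^{-1}$ established in the proof of Proposition \ref{alpha nef}, where $M$ is the line bundle on $\mc Q_L$ determined by $\wedge^{r-k}\mc K\cong p_C^*L^{-1}\otimes p_2^*M$. Pulling this isomorphism back along $\mathrm{id}_C\times f'$ gives $\wedge^{r-k}\t K_2\cong p_C^*L^{-1}\otimes p_2^*(f'^*M)$, where $\t K_2$ is the kernel of the family \eqref{quotient on D_2} defining $f'$. On the other hand, reading off \eqref{quotient on D_2} directly, this kernel is $\t K_2=\big(p_C^*L^{-1}\otimes p_2^*\mc O_{D_2}(-1)\big)\oplus p_C^*\mc O_C^{\oplus(r-k-1)}$, which has rank $r-k$, so taking top exterior power yields $\wedge^{r-k}\t K_2\cong p_C^*L^{-1}\otimes p_2^*\mc O_{D_2}(-1)$. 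Cancelling the common factor $p_C^*L^{-1}$ and applying $p_{2*}$ (using $p_{2*}\mc O_{C\times D_2}=\mc O_{D_2}$) forces $f'^*M\cong\mc O_{D_2}(-1)$, hence $f'^*\alpha=(f'^*M)^{-1}\cong\mc O_{D_2}(1)$. As $D_2$ is a line, this has degree $1$, giving $[D_2]\cdot[\alpha]=1$.

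\textbf{Computation of $[D_1]\cdot[\beta_{d+g-1}]$.} I represent $\beta_{d+g-1}$ by $\beta_M=\det p_{2*}(\mc F\otimes p_C^*M)$ for a line bundle $M$ of degree $d+g-1$ (Remark \ref{beta_m}); recall from Proposition \ref{beta nef} that for such $M$ the sheaf $p_{2*}(\mc F\otimes p_C^*M)$ is locally free on all of $\mc Q_L$ with vanishing $R^1$. By cohomology and base change (Remark \ref{pull-push}), $f_1^*\beta_M=\det p_{2*}(\mc G_1\otimes p_C^*M)$, where $\mc G_1$ is the quotient in \eqref{family D_1}. Applying the snake lemma to \eqref{family D_1} — left column exact with cokernel $\iota_{x*}\mc O_{D_1}(1)$, middle column the identity — produces the short exact sequence
$$0\to\iota_{x*}\mc O_{D_1}(1)\to\mc G_1\to p_C^*F'\to 0\,,$$
with $F'=\mc O_C^{\oplus r}/K'$. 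Tensoring by $p_C^*M$, using the projection formula $\iota_{x*}\mc O_{D_1}(1)\otimes p_C^*M\cong\iota_{x*}\mc O_{D_1}(1)$ (the fibre of $M$ at $x$ is a line), then pushing forward by $p_2$ and invoking $R^1p_{2*}\iota_{x*}\mc O_{D_1}(1)=0$ together with $R^1p_{2*}(\mc G_1\otimes p_C^*M)=0$, I obtain
$$0\to\mc O_{D_1}(1)\to p_{2*}(\mc G_1\otimes p_C^*M)\to H^0(C,F'\otimes M)\otimes\mc O_{D_1}\to 0\,.$$
Taking determinants, the trivial-bundle term drops out and $f_1^*\beta_M\cong\mc O_{D_1}(1)$, of degree $1$ on the line $D_1$; hence $[D_1]\cdot[\beta_{d+g-1}]=1$.

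\textbf{Main obstacle.} The $\alpha$-side is essentially determinant bookkeeping on the kernel and should be routine. The delicate part is the $\beta$-side: one must (i) justify replacing $\beta_{d+g-1}$ by the honest pushforward determinant $\det p_{2*}(\mc F\otimes p_C^*M)$ on all of $\mc Q_L$ — this rests on uniqueness of the extension across the codimension $\geqslant2$ locus $\mc Q_L\setminus\mc Q_L^s$ together with the global local-freeness from Proposition \ref{beta nef} — and (ii) control the pushforward of $\mc G_1\otimes p_C^*M$ through the snake-lemma sequence, where the two points needing care are the projection-formula simplification of the torsion term $\iota_{x*}\mc O_{D_1}(1)$ and the vanishing of the relevant $R^1p_{2*}$, which is precisely what makes $\det p_{2*}$ multiplicative in the sequence and renders the $p_C^*F'$ contribution a trivial bundle.
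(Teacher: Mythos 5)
Your proposal is correct and follows essentially the same route as the paper: for $[D_2]\cdot[\alpha]$ it takes $\wedge^{r-k}$ of the kernel of the family \eqref{quotient on D_2} and reads off $f'^*\alpha=\mc O_{D_2}(1)$ via the identification $\alpha=M^{-1}$, and for $[D_1]\cdot[\beta_{d+g-1}]$ it applies cohomology and base change together with the short exact sequence $0\to\iota_{x*}(\mc O_{D_1}(1))\to\mc G_1\to p_C^*F'\to 0$ from \eqref{family D_1} to get $\det p_{2*}(\mc G_1\otimes p_C^*M)\cong\mc O_{D_1}(1)$. Your write-up merely makes explicit the steps the paper leaves as ``easily follows'' (the projection-formula treatment of the torsion term, the $R^1p_{2*}$ vanishing, and the identification of $\beta_M$ with $\det p_{2*}(\mc F\otimes p_C^*M)$ on all of $\mc Q_L$), all of which are handled correctly.
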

	\begin{proof}
		To compute $[D_2]\cdot [\alpha]$ we shall use the description of the map $f$ 
		in the proof of Proposition \ref{alpha nef}. The kernel in the family of sheaves 
		defining $D_2$ (see \eqref{quotient on D_2}) is 
		$$\Big(p_C^*L^{-1}\otimes p_2^*\mc O_{D_2}(-1)\Big)\bigoplus 
		\Big(p_C^*\mc O_C^{\oplus (r-k-1)}\Big)\,.$$
		Taking $\wedge^{r-k}$ of this sheaf gives the line bundle $p_C^*L^{-1}\otimes p_2^*\mc O_{D_2}(-1)$.
		Thus, the pullback of $\alpha$ to $D_2$ is $\mc O_{D_2}(1)$. The degree 
		of this line bundle is 1. Thus, $[D_2]\cdot [\alpha]=1$.
		
		Recall the family of quotients \eqref{family D_1} which defines a morphism
		$D_1\to \mc Q_L$. Using cohomology and base change, it easily follows 
		that the pullback of $\beta_{d+g-1}$ to $D_1$ is ${\rm det}(p_{2*}(\mc G_1\otimes M))$,
		where $M$ is any line bundle of degree $d+g-1$.
		From the short exact sequence (see \eqref{family D_1})
		$$0\to \iota_{x*}(\mc O_{D_1}(1)) \to \mc G_1\to p_C^*F'\to 0$$
		it easily follows that ${\rm det}(p_{2*}(\mc G_1\otimes M))=\mc O_{D_1}(1)$.
		Thus, $[D_1]\cdot [\beta_{d+g-1}]=1$.
	\end{proof}
	
	\begin{remark}
		As a Corollary of $[D_2]\cdot [\alpha]=1$, we see that $\alpha$ and $\beta_{d+g-1}$ 
		are not numerically equivalent. Thus, it follows that the natural map 
		${\rm Pic}(\mc Q_L)\to {\rm N}^1(\mc Q_L)$ is an isomorphism. 
	\end{remark}

	Putting together the above results, we have the following Theorem, which is 
	part \ref{intro 1} of Theorem \ref{thm intro}.
	
	\begin{theorem}\label{main theorem nef}
		Assume one of the following two holds:
		\begin{itemize}
			\item $g(C)\geqslant 3$ and $2\leqslant k\leqslant r-2$, or
			\item $g(C)=2$ and $3\leqslant k\leqslant r-2$.
		\end{itemize}Let $d\gg0$. Then 
		${\rm Pic}(\mc Q_L)$ is generated by the line bundles
		$\alpha$, $\beta_{d+g-1}$. Both these are globally generated and so nef, 
		but not ample.
		In particular, they are the boundaries of the nef cone. 
		The boundaries of the cone of effective curves are given by the classes 
		of $D_1$ and $D_2$. 
	\end{theorem}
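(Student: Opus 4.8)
The plan is to assemble the statement from the results already in hand, the crucial input being that the intersection pairing between the two line bundles and the two curves is unimodular. Concretely, Proposition \ref{alpha nef} gives $[\alpha]\cdot[D_1]=0$ and Proposition \ref{beta nef} gives $[\beta_{d+g-1}]\cdot[D_2]=0$, while Lemma \ref{D_2 dot alpha} gives $[\alpha]\cdot[D_2]=[\beta_{d+g-1}]\cdot[D_1]=1$. Thus the matrix of intersection numbers of $\{\alpha,\beta_{d+g-1}\}$ against $\{D_1,D_2\}$ is $\left(\begin{smallmatrix}0&1\\1&0\end{smallmatrix}\right)$, which has determinant $-1$. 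I would record this at the outset, since all the conclusions follow from it together with facts already established: $\mathrm{Pic}(\mc Q_L)\cong\Z\oplus\Z$ (point \eqref{Picard rank 2}), the isomorphism $\mathrm{Pic}(\mc Q_L)\xrightarrow{\sim}\mathrm N^1(\mc Q_L)$ (the Remark following Lemma \ref{D_2 dot alpha}), and the fact that $\alpha,\beta_{d+g-1}$ are globally generated and nef but not ample (Propositions \ref{alpha nef} and \ref{beta nef}).

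For the generation of $\mathrm{Pic}(\mc Q_L)$, I would consider the homomorphism $\mathrm N^1(\mc Q_L)\to\Z^{\oplus 2}$ sending a class $x$ to $(x\cdot[D_1],\,x\cdot[D_2])$. By the computation above, $\alpha\mapsto(0,1)$ and $\beta_{d+g-1}\mapsto(1,0)$, so the images of $\alpha$ and $\beta_{d+g-1}$ form a $\Z$-basis of $\Z^{\oplus 2}$. Hence this homomorphism is surjective, and a surjective homomorphism $\Z^{\oplus2}\to\Z^{\oplus2}$ is an isomorphism; therefore $\{\alpha,\beta_{d+g-1}\}$ is a $\Z$-basis of $\mathrm N^1(\mc Q_L)=\mathrm{Pic}(\mc Q_L)$, which is the first assertion.

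For the nef cone I would invoke Kleiman's criterion: $\mathrm{Nef}(\mc Q_L)$ is the dual, under the intersection pairing, of the closed cone of curves $\overline{\mathrm{NE}}(\mc Q_L)$. Writing a real class in the basis just found as $x=s\alpha+t\beta_{d+g-1}$, the computation above gives $x\cdot[D_1]=t$ and $x\cdot[D_2]=s$. Since $D_1,D_2$ are effective curves, any nef $x$ satisfies $t\geqslant0$ and $s\geqslant0$, so $\mathrm{Nef}(\mc Q_L)\subseteq\R_{\geqslant0}\alpha+\R_{\geqslant0}\beta_{d+g-1}$; the reverse inclusion is immediate because $\alpha$ and $\beta_{d+g-1}$ are nef. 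This gives the equality of cones and identifies $\alpha,\beta_{d+g-1}$ as the two boundary rays. Dually, $\R_{\geqslant0}[D_1]+\R_{\geqslant0}[D_2]$ is exactly the dual of $\R_{\geqslant0}\alpha+\R_{\geqslant0}\beta_{d+g-1}$ by the same numerics, and it is contained in $\overline{\mathrm{NE}}(\mc Q_L)$ because $D_1,D_2$ are effective; hence it equals $\overline{\mathrm{NE}}(\mc Q_L)$, so $[D_1],[D_2]$ are the boundaries of the cone of effective curves.

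The substantive work lies in the preceding constructions, not in this assembly: producing the curves $D_1,D_2$ with the stated intersection behaviour, and — the genuinely delicate input — knowing in advance that the Picard rank is exactly $2$, which is where the hypotheses on $g(C)$ and $k$ enter through point \eqref{Picard rank 2}. Once rank $2$ is granted, the argument is purely formal two\nobreakdash-dimensional convex geometry; the only point requiring care is distinguishing integral generation of $\mathrm{Pic}(\mc Q_L)$ from mere $\R$\nobreakdash-spanning, and that is precisely what the unimodularity of the pairing matrix secures.
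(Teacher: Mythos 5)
Your proposal is correct and takes essentially the same approach as the paper: the paper's own (very terse) proof likewise writes an arbitrary integral curve as $a[D_1]+b[D_2]$ and intersects with the nef classes $\alpha$, $\beta_{d+g-1}$ to conclude $a,b\geqslant 0$, with everything else resting on the previously established propositions and the pairing matrix $\left(\begin{smallmatrix}0&1\\1&0\end{smallmatrix}\right)$. The only difference is explicitness: you spell out the unimodularity/surjectivity argument for integral generation of $\mathrm{Pic}(\mc Q_L)$ and the Kleiman duality step, both of which the paper leaves implicit.
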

	\begin{proof}
		Given an integral curve $C'\subset \mc Q_L$, let us write 
		$[C']=a[D_1]+b[D_2]$ in $N_1(\mc Q_L)$. Intersecting with 
		$\alpha$ and $\beta_{d+g-1}$ we easily see that $a\geqslant 0$ 
		and $b\geqslant 0$. 
	\end{proof}

	\section{Effective cone}
	In this section we shall determine the cone of effective divisors in 
	the Picard group of $\mc Q_L$. Let 
	\begin{equation}\label{def Q'}
		\mc Q'_L=\{[\mc O_C^{\oplus r}\xrightarrow{q} F]\in \mc Q_L^s\,\vert\, 
		{\rm Ker}(q)\text{ is stable} \}\,. 
	\end{equation} 
	denote the open set consisting 
	of quotients such that both $F$ and ${\rm Ker}(q)$ 
	are stable. 
	
	\begin{lemma}\label{codim Q'}
		We have ${\rm codim}(\mc Q_L\setminus \mc Q'_L,\mc Q_L)\geqslant 2$.
	\end{lemma}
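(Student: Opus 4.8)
The plan is to split the complement as $\mc Q_L\setminus\mc Q'_L = (\mc Q_L\setminus\mc Q_L^s)\cup(\mc Q_L^s\setminus\mc Q'_L)$. The first piece has codimension $\geqslant 2$ by the estimate recalled at the end of Section 2, so it suffices to bound the locus $\mc Q_L^s\setminus\mc Q'_L$ of stable quotients $[\mc O_C^{\oplus r}\xrightarrow{q}F]$ whose kernel $K:={\rm Ker}(q)$ is unstable. Since $F$ is stable it is locally free, hence so is $K$; it has rank $r-k$, degree $-d$, and being a subsheaf of $\mc O_C^{\oplus r}$ it satisfies $\mu_{\rm max}(K)\leqslant 0$. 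For $d\gg0$ one has $\mu_{\rm min}(K^\vee\otimes F)\geqslant\mu(F)=d/k>2g-2$, so ${\rm Ext}^1(K,F)=H^1(K^\vee\otimes F)=0$; thus $\mc Q$ is smooth along $\mc Q^s$ and $\dim\mc Q_L=\chi(K,F)-g$, where $\chi(K,F)=rd+k(r-k)(1-g)$.

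Next I would stratify $\mc Q_L^s\setminus\mc Q'_L$ according to the type $(r_1,d_1)$ of the maximal destabilising subsheaf $K_1\subset K$; there are finitely many such types since the kernels form a bounded family. Sending $[q]$ to $K_1$, viewed as a subsheaf of $\mc O_C^{\oplus r}$, gives a morphism from each stratum to the Quot scheme parametrising such $K_1$, whose fibre consists of quotients of $Q_1:=\mc O_C^{\oplus r}/K_1$ of the correct type and determinant. This bounds the dimension of the stratum by ${\rm hom}(K_1,Q_1)+\big[\chi(K_2,F)-g\big]$, where $K_2:=K/K_1$: the first summand is the tangent-space bound for the Quot scheme of the subsheaves $K_1$, and the second bounds the (determinant-fixed) Quot scheme of quotients $Q_1\to F$. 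The latter is exact because ${\rm Ext}^1(K_2,F)=0$: indeed $\mu_{\rm max}(K_2)<\mu(K_1)\leqslant0$, so ${\rm Hom}(F,K_2\otimes\omega_C)=0$ by stability of $F$ and $d\gg0$, which also ensures the determinant fibre has the expected dimension $\chi(K_2,F)-g$.

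Writing ${\rm hom}(K_1,Q_1)=\chi(K_1,Q_1)+{\rm ext}^1(K_1,Q_1)$ and using Riemann--Roch, the codimension of the stratum is at least
$$(r-k)d_1+r_1d+(g-1)r_1(r-k-r_1)-{\rm ext}^1(K_1,Q_1)\,.$$
Since $K_1$ destabilises $K$ one has $(r-k)d_1+r_1d\geqslant1$, and $1\leqslant r_1\leqslant r-k-1$ gives $(g-1)r_1(r-k-r_1)\geqslant g-1\geqslant1$. The main point, and the step I expect to be delicate, is to control the obstruction ${\rm ext}^1(K_1,Q_1)={\rm hom}(Q_1,K_1\otimes\omega_C)$ uniformly over all types. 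Since $Q_1$ is globally generated ($\mu_{\rm min}(Q_1)\geqslant0$), this group vanishes unless $\mu(K_1\otimes\omega_C)\geqslant0$, i.e. unless $d_1\geqslant-r_1(2g-2)$; outside this range the displayed bound already gives codimension $\geqslant1+(g-1)=g\geqslant2$. Inside the bounded range $-r_1(2g-2)\leqslant d_1\leqslant0$ the obstruction is at most $r\,h^0(K_1\otimes\omega_C)$, a quantity independent of $d$, whereas $(r-k)d_1+r_1d\geqslant r_1\big(d-(r-k)(2g-2)\big)$ grows linearly in $d$; hence for $d\gg0$ the obstruction is absorbed and the codimension is again $\geqslant2$. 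Taking the maximum over the finitely many types yields ${\rm codim}(\mc Q_L\setminus\mc Q'_L,\mc Q_L)\geqslant2$.
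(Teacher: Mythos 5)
Your route is genuinely different from the paper's: the paper dualizes the universal sequence on the torsion-free locus, identifying $\mc Q^{\rm tf}_L$ with $\widetilde{\mc Q}^{\rm tf}_L$ for $\widetilde{\mc Q}={\rm Quot}_{C/\C}(\mc O_C^{\oplus r},r-k,d)$, and then quotes the codimension bound of \cite{gs22} for the non-stable locus of \emph{both} Quot schemes, whereas you attempt a direct Harder--Narasimhan stratification with a Riemann--Roch count. For kernels that are actually \emph{unstable} your count is correct: the stratum bound ${\rm codim}\geqslant (r-k)d_1+r_1d+(g-1)r_1(r-k-r_1)-{\rm ext}^1(K_1,Q_1)$ is right, and your two-case control of the obstruction (vanishing when $\mu(K_1)<-(2g-2)$ via global generation of $Q_1$, absorption by the linear-in-$d$ term otherwise) is sound and uniform in $d$.

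The gap is that your stratification only sees kernels admitting a subsheaf of \emph{strictly} larger slope. The locus $\mc Q^s_L\setminus\mc Q'_L$ also contains quotients whose kernel $K$ is semistable but not stable; for these the HN filtration is trivial, there is no ``maximal destabilising subsheaf'' with $1\leqslant r_1\leqslant r-k-1$, and the best available substitute is a proper saturated subsheaf $K_1$ with $\mu(K_1)=\mu(K)$ (a Jordan--H\"older subsheaf). Your key inequality then degenerates to $(r-k)d_1+r_1d=0$, and since ${\rm ext}^1(K_1,Q_1)$ still vanishes for $d\gg0$, the bound you get is only ${\rm codim}\geqslant (g-1)r_1(r-k-r_1)$. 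This patch rescues the argument when $g\geqslant 3$, and when $g=2$ provided $r-k\geqslant 3$; but in the case $g=2$, $r-k=2$ (which the hypotheses allow, e.g.\ $r=5$, $k=3$) it yields only ${\rm codim}\geqslant 1$. Worse, this is not slack in the estimate: for $g=2$ and kernels of rank $2$, the locus of quotients whose kernel is a non-split extension of two line bundles of degree $-d/2$ has codimension exactly $g-1=1$ in $\mc Q_L$ (a count of parameters: $g$ for the sub-line-bundle, $g-2$ for the projectivized extension space, plus $r\chi(K^\vee)-1$ for the surjection modulo ${\rm Aut}(K)=\C^*$; this is the Quot-scheme shadow of the classical fact that for genus $2$, rank $2$, fixed determinant, the strictly semistable bundles form a divisor --- the Kummer surface inside $M_{2,L}\cong\mb P^3$). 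So no dimension count can close this case, and you should flag it: it is exactly the point where the paper's own proof is also delicate, since the bound from \cite{gs22} invoked there for $\widetilde{\mc Q}$, whose quotients have rank $r-k$, is stated at $g=2$ only for quotient rank at least $3$.
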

	\begin{proof}
		Let $\mc Q^{\rm tf}_L\subset \mc Q_L$ denote the locus of 
		quotients $[\mc O_C^{\oplus r}\to F]$ such that $F$ is torsion 
		free. Note that $\mc Q^s_L\subset \mc Q^{\rm tf}_L$. 
		For ease of notation, let us denote the Quot scheme 
		${\rm Quot}_{C/\C}(\mc O_C^{\oplus r},r-k,d)$ by $\widetilde{\mc Q}$. 
		There is an isomorphism of schemes 
		$\varphi\colon  \mc Q^{\rm tf}_L\to \widetilde{\mc Q}^{\rm tf}_L$
		which sends a quotient 
		$$[{\rm Ker}(q)\subset \mc O_C^{\oplus r}\xrightarrow{q}F]\mapsto 
		[F^{\vee}\subset  \mc O_C^{\oplus r}\to {\rm Ker}(q)^\vee]\,.$$
		The open set $\mc Q'_L$ is precisely the intersection 
		$\mc Q^s_L\cap \varphi^{-1}(\widetilde{\mc Q}^s_L)$. 
		By the discussion before equation (8.10)
		in \cite{gs22}, it follows that 
		${\rm codim}(\mc Q_L\setminus \mc Q^s_L,\mc Q_L)\geqslant 2$. Similarly, 
		${\rm codim}(\widetilde{\mc Q}^{\rm tf}_L\setminus \widetilde{\mc Q}^s_L, \widetilde{\mc Q}^{\rm tf}_L)\geqslant 2$.
		Thus, it follows that the codimension of 
		$\mc Q^{\rm tf}_L\setminus\varphi^{-1}(\widetilde{\mc Q}^s_L)$
		in $\mc  Q^{\rm tf}_L$ is at least 2. The Lemma now follows easily.
	\end{proof}

	We will use the above Lemma to write down another curve $D_3\to \mc Q_L$ such 
	that the image lies in $\mc Q_L'$. Let $E$ be a stable bundle  of rank $r-k$
	with ${\rm det}(E)=L^{-1}$. Consider the space $\mb P({\rm Hom}(E,\mc O_C^{\oplus r})^\vee)$.
	The closed points of this space are in bijection with nonzero 
	maps $\mc O_C^{\oplus r}\to E^\vee$. 
	The locus of points in $\mb P({\rm Hom}(E,\mc O_C^{\oplus r})^\vee)$ 
	corresponding to non-surjective
	maps $\mc O_C^{\oplus r}\to E^\vee$ has codimension at least 2, see the 
	proof in \cite[Lemma 7.12]{gs22}. Let 
	$$U_{E^\vee}\subset \mb P({\rm Hom}(\mc O_C^{\oplus r},E^\vee)^\vee)=
	\mb P({\rm Hom}(E,\mc O_C^{\oplus r})^\vee)$$
	denote the locus parametrizing surjective maps $\mc O_C^{\oplus r}\to E^\vee$. Let $M^s_{r-k,L}$ 
	denote the moduli space of stable bundles of rank $r-k$ and determinant $L$. 
	Consider the natural map 
	\begin{equation}\label{map to moduli space}
		\pi\colon \widetilde{\mc Q}^s_L\to M^s_{r-k,L}\,,
	\end{equation}
	which sends 
	$[\mc O_C^{\oplus r}\to F]\mapsto [F]$. The fiber over the point $[F]$ 
	is precisely the set $U_F$.
	Let $T$ denote the closed subset 
	$\widetilde{\mc Q}^s_L\setminus \widetilde{\mc Q}_L'$.
	
	\begin{lemma}
		For general $F\in M^s_{r-k,L}$, ${\rm codim}(T\cap U_F,U_F)\geqslant 2$.
	\end{lemma}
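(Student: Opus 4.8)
The plan is to deduce the statement from a codimension estimate on the total space $\widetilde{\mc Q}^s_L$ together with the behaviour of the morphism $\pi$ of \eqref{map to moduli space} over a general point. Since $U_F=\pi^{-1}([F])$ is exactly the fibre of $\pi$ over $[F]$, what must be shown is that restricting the closed subset $T=\widetilde{\mc Q}^s_L\setminus\widetilde{\mc Q}'_L$ to a general fibre of $\pi$ does not lower its codimension below $2$. I would base the argument on three ingredients: (i) ${\rm codim}(T,\widetilde{\mc Q}^s_L)\geqslant 2$; (ii) $\pi$ is dominant; and (iii) a fibre-dimension argument transporting (i) to the general fibre.

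For (i) I would run the argument of Lemma \ref{codim Q'} with the roles of $k$ and $r-k$ interchanged. Since $2\leqslant r-k\leqslant r-2$ holds exactly when $2\leqslant k\leqslant r-2$, the stability and torsion-free codimension estimates of \cite{gs22} used in that proof apply verbatim to $\widetilde{\mc Q}={\rm Quot}_{C/\C}(\mc O_C^{\oplus r},r-k,d)$, giving ${\rm codim}(\widetilde{\mc Q}_L\setminus\widetilde{\mc Q}'_L,\widetilde{\mc Q}_L)\geqslant 2$. As $T$ is the locus of stable quotients with unstable kernel, $T\subseteq\widetilde{\mc Q}_L\setminus\widetilde{\mc Q}'_L$; and because $\widetilde{\mc Q}^s_L$ is a dense open subset of the integral scheme $\widetilde{\mc Q}_L$, the codimension of $T$ inside $\widetilde{\mc Q}^s_L$ is again at least $2$.

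For (ii) I would verify that $\pi$ is surjective: for $d\gg0$ every stable bundle $F$ of rank $r-k$ and determinant $L$ is globally generated, and since $r\geqslant(r-k)+2$ a general choice of $r$ sections generates $F$, producing a surjection $\mc O_C^{\oplus r}\twoheadrightarrow F$ and hence a point of $\widetilde{\mc Q}^s_L$ over $[F]$; as $\widetilde{\mc Q}^s_L$ is irreducible this makes $\pi$ dominant. For (iii) write $T=\bigcup_i T_i$ as its irreducible components. A component $T_i$ that does not dominate $M^s_{r-k,L}$ is disjoint from the fibre $U_F$ for general $F$. For a component $T_i$ dominating $M^s_{r-k,L}$, the theorem on fibre dimension gives, for general $[F]$, both $\dim U_F=\dim\widetilde{\mc Q}^s_L-\dim M^s_{r-k,L}$ and $\dim(T_i\cap U_F)=\dim T_i-\dim M^s_{r-k,L}$, whence ${\rm codim}(T_i\cap U_F,U_F)={\rm codim}(T_i,\widetilde{\mc Q}^s_L)\geqslant 2$. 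Intersecting the finitely many dense open loci of $M^s_{r-k,L}$ on which these equalities hold yields ${\rm codim}(T\cap U_F,U_F)\geqslant 2$ for general $F$.

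I expect the main obstacle to be the passage in (iii) from a codimension bound on the total space to one on the general fibre. The key is to split the components of $T$ according to whether or not they dominate $M^s_{r-k,L}$, and to invoke the fibre-dimension theorem — no flatness of $\pi$ is required — so that codimension is preserved exactly on the dominating components while the non-dominating ones are swept out of a general fibre. A secondary point to check is that the symmetric form of Lemma \ref{codim Q'} invoked in (i) is genuinely available under the running hypotheses $\dagger$, i.e. that the relevant estimates of \cite{gs22} hold for $\widetilde{\mc Q}$ as well as for $\mc Q$.
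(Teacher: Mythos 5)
Your step (iii) is exactly the paper's proof: the paper runs the same fibre--dimension count in contrapositive form --- if ${\rm codim}(T\cap U_F,U_F)\leqslant 1$ for general $[F]$, then $\dim T=\dim M^s_{r-k,L}+\dim U_F-1=\dim\widetilde{\mc Q}^s_L-1$, contradicting ${\rm codim}(T,\widetilde{\mc Q}^s_L)\geqslant 2$ --- and your step (ii) merely makes explicit the dominance of $\pi$ that the paper uses silently. The genuine divergence is step (i), and the ``secondary point'' you flag there is a real problem with your route: under $\dagger$ with $g(C)=2$ the hypothesis is $3\leqslant k\leqslant r-2$, which is \emph{not} symmetric under $k\leftrightarrow r-k$ (take $k=r-2$, so that $r-k=2$), so the estimates of \cite{gs22} do not ``apply verbatim'' after swapping the roles of $k$ and $r-k$. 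The paper sidesteps this entirely: it obtains ${\rm codim}(T,\widetilde{\mc Q}^s_L)\geqslant 2$ by citing Lemma \ref{codim Q'}, the citation implicitly going through the isomorphism $\varphi\colon\mc Q^{\rm tf}_L\to\widetilde{\mc Q}^{\rm tf}_L$ from that lemma's proof. Concretely, $\varphi^{-1}(T)$ is the locus of points of $\mc Q^{\rm tf}_L$ whose kernel is stable but whose quotient is not stable, so $\varphi^{-1}(T)\subset\mc Q_L\setminus\mc Q^s_L$, which has codimension $\geqslant 2$ in $\mc Q_L$ by the rank-$k$ estimate of \cite{gs22} that is valid under $\dagger$; since $\varphi$ is an isomorphism between dense open subsets of the irreducible schemes $\mc Q_L$ and $\widetilde{\mc Q}_L$, both of dimension $rd-k(r-k)(g-1)-g$, the bound transfers to $T$ with no swapped hypotheses needed. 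If you replace your justification of (i) by this transfer through $\varphi$, your argument coincides with the paper's and works uniformly in both cases of $\dagger$.
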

	\begin{proof}
		If ${\rm codim}(T\cap U_F,U_F)\leqslant  1$ for general $[F]\in M^s_{r-k,L}$,
		then it follows that 
		$${\rm dim}(T)={\rm dim}(M^s_{r-k,L})+{\rm dim}(\pi^{-1}([F]))-1={\rm dim}(\widetilde{\mc Q}_L^s)-1\,.$$
		This contradicts Lemma \ref{codim Q'}, which says that 
		${\rm codim}(T,\widetilde{\mc Q}^s_L)\geqslant 2$.
	\end{proof}
	Thus, it follows that for general $E$, the locus of points 
	in $U_{E^\vee}$, such that the kernel of $\mc O_C^{\oplus r}\to E^\vee$ is not stable,
	has codimension $\geqslant 2$. In other words, if 
	$U'\subset \mb P({\rm Hom}(E,\mc O_C^{\oplus r})^\vee)$
	denotes the set of points corresponding to inclusions
	$E \to \mc O_C^{\oplus r}$ such that the cokernel is torsion
	free and stable, then for general $E$,
	$${\rm codim}(\mb P({\rm Hom}(E,\mc O_C^{\oplus r})^\vee)\setminus U',
	\mb P({\rm Hom}(E,\mc O_C^{\oplus r})^\vee))\geqslant 2\,.$$ 
	
	If $W\subset \mb P^n$ is a closed 
	subset such that ${\rm codim}(W,\mb P^n)\geqslant 2$, then the general  
	line in $\mb P^n$ does not meet $W$. This is easily seen by projecting 
	from a point outside $W$. 
	Thus, we can find a line $D_3\subset \mb P({\rm Hom}(E,\mc O_C^{\oplus r})^\vee)$,
	which is completely contained in $U'$. 
	We get a family of quotients 
	on $C\times D_3$
	\begin{equation}\label{family D_3}
		0\to p_C^*E\otimes p_2^*\mc O_{D_3}(-1)\to  p_C^*\mc O_C^{\oplus r}\to \mc G_3\to 0\,,
	\end{equation}
	such that for each $t\in D_3$, the sheaf $\mc G_{3,t}$ is stable. 
	The above family defines a morphism $D_3\to \mc Q_L$, which 
	is injective on closed points. Clearly, the image of $D_3$ 
	lands in $\mc Q_L'$.

	\begin{lemma}\label{alpha dot D_3}
		We have $[\alpha]\cdot [D_3]=r-k$ and $[\beta_{d+g-1}]\cdot [D_3]=d(r-k-1)$.
	\end{lemma}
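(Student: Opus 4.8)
The plan is to restrict each of $\alpha$ and $\beta_{d+g-1}$ to the line $D_3$ via the defining family \eqref{family D_3} and read off the resulting degree on $D_3\cong\mb P^1$.

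For $[\alpha]\cdot[D_3]$, I would invoke the description of $\alpha$ obtained in the proof of Proposition \ref{alpha nef}: $\alpha=M^{-1}$, where $M$ is the line bundle on $\mc Q_L$ with $\wedge^{r-k}\mc K\cong p_C^*L^{-1}\otimes p_2^*M$. The kernel of \eqref{family D_3} is $p_C^*E\otimes p_2^*\mc O_{D_3}(-1)$, and since $E$ has rank $r-k$ and $\det E=L^{-1}$, taking $\wedge^{r-k}$ gives $p_C^*L^{-1}\otimes p_2^*\mc O_{D_3}(-(r-k))$. Comparing with the formula for $\wedge^{r-k}\mc K$ shows that the restriction of $M$ to $D_3$ is $\mc O_{D_3}(-(r-k))$, hence $\alpha\vert_{D_3}\cong\mc O_{D_3}(r-k)$, which has degree $r-k$.

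For $[\beta_{d+g-1}]\cdot[D_3]$, fix a line bundle $M$ of degree $d+g-1$ satisfying \eqref{condition deg m}. The first task is to record the cohomology vanishings needed for base change: $h^1(C,M)=0$ since $\deg M>2g-2$; $h^1(C,\mc G_{3,t}\otimes M)=0$ for every $t\in D_3$, since each $\mc G_{3,t}$ is stable of rank $k$ and degree $d$ and $M$ satisfies \eqref{condition deg m}; and $h^1(C,E\otimes M)=0$. The last of these holds because $E\otimes M$ is stable of slope $-d/(r-k)+d+g-1$, which exceeds $\mu_{\rm max}(\omega_C)=2g-2$ once $d\gg0$ (here one uses $r-k\geqslant 2$), so \cite[Lemma 1.3.3]{HL} together with Serre duality yields the vanishing. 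By Remark \ref{pull-push}, the pullback of $\beta_M$ to $D_3$ equals $\det(p_{2*}(\mc G_3\otimes p_C^*M))$. Applying $p_{2*}(-\otimes p_C^*M)$ to \eqref{family D_3}, the vanishing $h^1(C,E\otimes M)=0$ kills $R^1p_{2*}$ of the subbundle, producing a short exact sequence of locally free sheaves on $D_3$,
$$0\to H^0(C,E\otimes M)\otimes\mc O_{D_3}(-1)\to H^0(C,M)^{\oplus r}\otimes\mc O_{D_3}\to p_{2*}(\mc G_3\otimes p_C^*M)\to 0\,.$$

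Taking determinants, the middle term is trivial while the subbundle contributes $\mc O_{D_3}(-h^0(C,E\otimes M))$, so the pullback of $\beta_M$ is $\mc O_{D_3}(h^0(C,E\otimes M))$ and therefore $[\beta_{d+g-1}]\cdot[D_3]=h^0(C,E\otimes M)$. Finally I would evaluate this by Riemann-Roch: since $h^1(C,E\otimes M)=0$ and $\deg(E\otimes M)=-d+(r-k)(d+g-1)$ with $\mathrm{rank}(E\otimes M)=r-k$, one gets $h^0(C,E\otimes M)=-d+(r-k)(d+g-1)+(r-k)(1-g)=d(r-k-1)$. The one step that genuinely requires care is the stability-based vanishing $h^1(C,E\otimes M)=0$, as it is responsible both for the subbundle's determinant being the expected power of $\mc O_{D_3}(-1)$ and for the Riemann-Roch value of $h^0$; everything else is a routine determinant and degree computation.
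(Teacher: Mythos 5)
Your proposal is correct and follows essentially the same route as the paper: the $\alpha$-degree is read off by taking $\wedge^{r-k}$ of the kernel $p_C^*E\otimes p_2^*\mc O_{D_3}(-1)$ and comparing with the description of $\alpha$ from Proposition \ref{alpha nef}, and the $\beta$-degree comes from pushing forward the twisted family \eqref{family D_3}, using $h^1(C,E\otimes M)=0$ to get the same short exact sequence and then evaluating $h^0(C,E\otimes M)=\chi(E\otimes M)=d(r-k-1)$. The only differences are cosmetic: you make the base-change vanishings explicit and prove $h^1(C,E\otimes M)=0$ via \cite[Lemma 1.3.3]{HL} applied to $E\otimes M$, whereas the paper phrases the same vanishing through Serre duality and the negativity of the slope of $E^\vee\otimes M^\vee\otimes\omega_C$.
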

	\begin{proof}
		To compute $[\alpha]\cdot [D_3]$ we follow the description 
		of the map $f$ (see \eqref{def f}) given in Proposition \ref{alpha nef}. 
		$$\wedge^{r-k}(p_C^*E\otimes p_2^*\mc O_{D_3}(-1))=p_C^*L^{-1}\otimes p_2^*\mc O_{D_3}(-(r-k))\,.$$
		Thus, it follows that $[\alpha]\cdot [D_3]=r-k$. 
		
		Let $M$ be a line bundle of degree $d+g-1$. By Serre duality, we have, 
		$h^1(C,E\otimes M)=h^0(E^\vee\otimes M^\vee\otimes \omega_C)$.
		Note $E$ is stable and 
		$$\mu(E^\vee\otimes M^\vee\otimes \omega_C)=\frac{d}{r-k}-(d+g-1)+2g-2\,.$$
		The slope is $<0$ for $d\gg0$ as $r-k\geqslant 2$. Thus, $h^0(E^\vee\otimes M^\vee\otimes \omega_C)=h^1(C,E\otimes M)=0$.
		Thus, applying $p_{2*}(-\otimes p_C^*M)$ to \eqref{family D_3}, we get 
		the short exact sequence 
		$$0\to H^0(C,E\otimes M)\otimes \mc O_{D_3}(-1)\to H^0(C,M)^{\oplus r}\otimes \mc O_{D_3}\to 
		p_{2*}(\mc G_3\otimes p_C^*M)\to 0\,.$$
		It follows that 
		\begin{align*}
			[D_3]\cdot [\beta_{d+g-1}] & = {\rm deg}(p_{2*}(\mc G_3\otimes p_C^*M))\\
			&=h^0(C,E\otimes M)=\chi(E\otimes M)\\
			&=d(r-k-1)\,.
		\end{align*}
		This completes the proof of the Lemma.
	\end{proof}

	Consider the map $\pi\colon \mc Q^s_L\to M^s_{k,L}$,
	where $M^s_{k,L}$ denotes the moduli space parametrizing 
	stable bundles of rank $k$ and determinant $L$. The map
	$\pi$ sends a quotient $[\mc O_C^{\oplus r}\to F]$ to $[F]$.
	The fiber of $\pi$ over the point $[F]$ is the subset 
	$U_F\subset \mb P({\rm Hom}(\mc O_C^{\oplus r},F)^\vee)$
	corresponding to surjective maps. Let $U_F'\subset U_F$
	be the subset corresponding to maps for which the kernel 
	is also a stable bundle. 
	Arguing as in the construction of the curve $D_3$, we see 
	that for a general stable bundle $F$, one has 
	$${\rm codim}(\mb P({\rm Hom}(\mc O_C^{\oplus r},F)^\vee)\setminus U_F',
	\mb P({\rm Hom}(\mc O_C^{\oplus r},F)^\vee))\geqslant 2\,.$$
	Thus, taking $F$ general stable and taking $D_4$ to be a general line in 
	$\mb P({\rm Hom}(\mc O_C^{\oplus r},F)^\vee)$, to get a 
	family 
	\begin{equation}\label{family D_4}
		0\to \mc K_4\to  p_C^*\mc O_C^{\oplus r}\to p_C^*F\otimes p_2^*\mc O_{D_4}(1)\to 0
	\end{equation}
	on $C\times D_4$. Again, this family has the property that for each $t\in D_4$,
	the sheaf $\mc K_{4,t}$ is stable. 
	
	\begin{lemma}\label{alpha dot D_4}
		We have $[\alpha]\cdot [D_4]=k$ and $[\beta_{d+g-1}]\cdot [D_4]=d(k+1)$.
	\end{lemma}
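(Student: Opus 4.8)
The plan is to read off both intersection numbers directly from the explicit family \eqref{family D_4} on $C\times D_4$, mirroring the argument of Lemma \ref{alpha dot D_3}. In fact the computation here is slightly simpler, because for $D_4$ it is the universal quotient, rather than the kernel, that is presented as a twisted pullback.

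For $[\alpha]\cdot[D_4]$ I would use the description of $f$ recorded in the proof of Proposition \ref{alpha nef}, namely that $\wedge^{r-k}\mc K\cong p_C^*L^{-1}\otimes p_2^*M$ with $\alpha=M^{-1}$. Taking $\wedge^{r-k}={\rm det}$ of the kernel in \eqref{family D_4}, and using that $p_C^*\mc O_C^{\oplus r}$ has trivial determinant together with ${\rm det}(F)=L$ and ${\rm rk}(F)=k$, one obtains
$$\wedge^{r-k}\mc K_4={\rm det}\big(p_C^*F\otimes p_2^*\mc O_{D_4}(1)\big)^{-1}=p_C^*L^{-1}\otimes p_2^*\mc O_{D_4}(-k)\,.$$
Comparing this with $\wedge^{r-k}\mc K\cong p_C^*L^{-1}\otimes p_2^*M$ and $\alpha=M^{-1}$ shows that the restriction of $\alpha$ to $D_4$ is $\mc O_{D_4}(k)$, so $[\alpha]\cdot[D_4]=k$.

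For $[\beta_{d+g-1}]\cdot[D_4]$, fix a line bundle $M$ of degree $d+g-1$. The first step is the vanishing $h^1(C,F\otimes M)=0$: by Serre duality this equals $h^0(C,F^\vee\otimes M^\vee\otimes\omega_C)$, and since $F$ is stable so is $F^\vee\otimes M^\vee\otimes\omega_C$, which has slope $-\tfrac{d}{k}-(d+g-1)+2g-2$; this is $<0$ for $d\gg0$, and a stable bundle of negative slope has no sections. Granting this, Remark \ref{pull-push} identifies the pullback of $\beta_M$ to $D_4$ with ${\rm det}\big(p_{2*}(\mc F\otimes p_C^*M)\big)$, where the universal quotient on $C\times D_4$ is $p_C^*F\otimes p_2^*\mc O_{D_4}(1)$. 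By the projection formula,
$$p_{2*}\big(p_C^*(F\otimes M)\otimes p_2^*\mc O_{D_4}(1)\big)=H^0(C,F\otimes M)\otimes\mc O_{D_4}(1)\,,$$
whose determinant is $\mc O_{D_4}\big(h^0(C,F\otimes M)\big)$. Finally a Riemann--Roch computation gives $h^0(C,F\otimes M)=\chi(F\otimes M)=d(k+1)$, whence $[\beta_{d+g-1}]\cdot[D_4]=d(k+1)$.

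Both computations are routine once the family \eqref{family D_4} is in hand, and I expect the only point needing genuine care to be the vanishing $h^1(C,F\otimes M)=0$: it is what legitimizes both the use of cohomology and base change (Remark \ref{pull-push}) and the identification of $p_{2*}p_C^*(F\otimes M)$ with a trivial bundle in the projection-formula step. The hypotheses ${\rm rk}(F)=k\geqslant1$ and $d\gg0$ are precisely what force the relevant slope to be negative.
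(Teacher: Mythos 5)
Your proposal is correct and follows essentially the same route as the paper: for $[\alpha]\cdot[D_4]$ you compute $\wedge^{r-k}\mc K_4=p_C^*L^{-1}\otimes p_2^*\mc O_{D_4}(-k)$ and invoke the seesaw description of $\alpha$ from Proposition \ref{alpha nef}, and for $[\beta_{d+g-1}]\cdot[D_4]$ you use the projection formula together with $h^1(C,F\otimes M)=0$ and Riemann--Roch to get $\chi(F\otimes M)=d(k+1)$, exactly as in the paper. The only difference is that you spell out the Serre-duality/stability argument for the vanishing $h^1(C,F\otimes M)=0$, which the paper leaves implicit (it is covered by the slope condition \eqref{condition deg m} and the discussion preceding Lemma \ref{def beta_M}).
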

	\begin{proof}
		Note that $\wedge^{r-k}\mc K_4=(p_C^*L\otimes p_2^*\mc O_{D_4}(k))^{-1}$.
		Using the description in Proposition \ref{alpha nef}, it follows
		that $[\alpha]\cdot [D_4]=k$. Let $M$ be a line bundle of degree 
		$d+g-1$. Since  
		$$p_{2*}(p_C^*(F\otimes M)\otimes p_2^*\mc O_{D_4}(1))=H^0(C,F\otimes M)\otimes \mc O_{D_4}(1)\,,$$
		and $H^1(C,F\otimes M)=0$, it follows that 
		$[\beta_{d+g-1}]\cdot [D_4]=\chi(F\otimes M)=d(k+1)$.
	\end{proof}
	
	\begin{lemma}\label{half plane effective}
		Let $a$ and $b$ be integers such that $a\alpha+b\beta_{d+g-1}$ 
		is an effective divisor. Then $ak+bd(k+1)\geqslant 0$. 
	\end{lemma}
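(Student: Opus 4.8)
The plan is to recognize that the quantity $ak+bd(k+1)$ is exactly an intersection number with the curve $D_4$, and then to use that $D_4$ moves in a family covering $\mc Q_L$. Concretely, by Lemma \ref{alpha dot D_4} we have $[\alpha]\cdot[D_4]=k$ and $[\beta_{d+g-1}]\cdot[D_4]=d(k+1)$, so
\[
(a\alpha+b\beta_{d+g-1})\cdot[D_4]=ak+bd(k+1)\,.
\]
Thus it suffices to show that the effective divisor $a\alpha+b\beta_{d+g-1}$ has nonnegative intersection with $[D_4]$. If $a=b=0$ there is nothing to prove, so I would assume the class is nonzero and fix an effective divisor $\Delta$ representing it, with $Z:=\mathrm{Supp}(\Delta)\subsetneq\mc Q_L$ a proper closed subset.

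The key step is to produce one curve $D_4$ arising from the construction preceding \eqref{family D_4} that is \emph{not} contained in $Z$. First I would record that the $D_4$'s sweep out a dense open subset of $\mc Q_L$: recall that $D_4$ is a general line in a fiber $\mb P({\rm Hom}(\mc O_C^{\oplus r},F)^\vee)$ of the map $\pi\colon\mc Q^s_L\to M^s_{k,L}$ taken over a general stable $F$, and that for general $F$ the locus $U_F'$ of surjections with stable kernel has complement of codimension $\geqslant 2$ in the fiber. By Lemma \ref{codim Q'} the open set $\mc Q_L'$ is dense, so there is a dense open $V\subset M^s_{k,L}$ such that every $[F]\in V$ enjoys the codimension $\geqslant 2$ property, and $\pi^{-1}(V)$ is dense open in $\mc Q_L$. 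Since $Z$ is proper closed, I may choose a point $q\in(\mc Q_L'\cap\pi^{-1}(V))\setminus Z$; write $q=[\mc O_C^{\oplus r}\to F]$ with $[F]\in V$, so $q\in U_F'$. Because the complement of $U_F'$ in $\mb P({\rm Hom}(\mc O_C^{\oplus r},F)^\vee)$ has codimension $\geqslant 2$, a general line through the point corresponding to $q$ lies entirely in $U_F'$ and hence defines a morphism $D_4\to\mc Q_L$ whose image passes through $q$.

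Finally, since $q\notin Z$, the curve $D_4$ is not contained in $\mathrm{Supp}(\Delta)$; restricting $\mc O_{\mc Q_L}(\Delta)$ to $D_4$ therefore gives a line bundle with a nonzero section, i.e.\ an effective divisor on $D_4\cong\mb P^1$, whose degree is nonnegative. Thus $\Delta\cdot[D_4]\geqslant 0$. As $[D_4]$ is a fixed numerical class (its pairings with $\alpha$ and $\beta_{d+g-1}$ are determined by Lemma \ref{alpha dot D_4}, independently of the chosen $F$ and line), this yields $ak+bd(k+1)=(a\alpha+b\beta_{d+g-1})\cdot[D_4]=\Delta\cdot[D_4]\geqslant 0$, as required. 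The main obstacle is precisely this covering argument: one must know that the $D_4$'s genuinely pass through a general point and can be chosen off any prescribed effective divisor, which rests entirely on the codimension $\geqslant 2$ estimates of Lemma \ref{codim Q'} together with their consequence that general lines through a general point of a fiber of $\pi$ remain in the good locus $U_F'$.
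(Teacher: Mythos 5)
Your proof is correct and follows essentially the same route as the paper: both reduce the inequality to $(a\alpha+b\beta_{d+g-1})\cdot[D_4]\geqslant 0$ via Lemma \ref{alpha dot D_4}, and both justify this by observing that the curves $D_4$ move in a family sweeping out a dense subset of $\mc Q_L$, so a suitable $D_4$ avoids being contained in the support of the effective divisor. Your version merely packages the covering argument slightly differently (fixing a point $q$ off the support and taking a general line through $q$, rather than noting that the divisor cannot contain all fibers of $\pi$), which amounts to the same thing.
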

	\begin{proof}
		Let $Y\subset \mc Q_L$ be an effective divisor. 
		Then $Y$ cannot contain all the fibers of the map 
		$\pi\colon \mc Q^s_L\to M^s_{k,L}$. Thus, for general $F$,
		the intersection $Y\cap U_{F}'\subsetneqq U'_{F}$.
		In particular, $Y$ does not contain the general line 
		in $U'_{F}$, that is, $Y$ does not contain $D_4$. 
		Thus, $[Y]\cdot [D_4]\geqslant 0$. Letting 
		the class of $Y$ to be $a\alpha+b\beta_{d+g-1}$, the Lemma 
		follows easily. 
	\end{proof}
	
	Let ${\rm Pic}(M_{k,L})={\rm Pic}(M_{k,L}^s)=\Z[\Theta]$, where 
	$\Theta$ is the unique ample generator. 
	\begin{lemma}\label{computing theta class}
		Let $\lambda_0:={\rm gcd}(k,d(k+1))={\rm gcd}(k,d)$. Then
		$\pi^*\Theta=\frac{1}{\lambda_0}(d(k+1)\alpha-k\beta_{d+g-1})$.
	\end{lemma}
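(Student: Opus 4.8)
The plan is to pin down the class $\pi^*\Theta$ inside ${\rm Pic}(\mc Q_L)=\Z\alpha\oplus\Z\beta_{d+g-1}$ by combining three ingredients: the intersection numbers already recorded in Lemma \ref{alpha dot D_3} and Lemma \ref{alpha dot D_4}, the fact that $D_4$ is contracted by $\pi$, and the splitting of the Picard group in point \eqref{Picard rank 2}. First I would observe that $\pi$ is only a morphism on $\mc Q^s_L$; since ${\rm codim}(\mc Q_L\setminus\mc Q^s_L,\mc Q_L)\geqslant 2$ and $\mc Q_L$ is locally factorial, the line bundle $\pi^*\Theta$ on $\mc Q^s_L$ extends uniquely to a line bundle on $\mc Q_L$, which I continue to denote $\pi^*\Theta$. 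By Theorem \ref{main theorem nef} I may write $\pi^*\Theta=a\alpha+b\beta_{d+g-1}$ with $a,b\in\Z$, so the task reduces to determining the pair $(a,b)$.

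The governing constraint comes from $D_4$. By construction $D_4$ is a general line inside the fibre $U_F'$ of $\pi$ over a general point $[F]\in M^s_{k,L}$, so $\pi$ contracts $D_4$ to $[F]$ and hence $(\pi^*\Theta)\cdot D_4=0$. Substituting $\alpha\cdot D_4=k$ and $\beta_{d+g-1}\cdot D_4=d(k+1)$ from Lemma \ref{alpha dot D_4}, this reads $ak+bd(k+1)=0$. Since ${\rm gcd}(k,k+1)=1$ one checks that $\lambda_0={\rm gcd}(k,d(k+1))={\rm gcd}(k,d)$ and that $d(k+1)/\lambda_0$ and $k/\lambda_0$ are coprime; therefore every integral solution of $ak+bd(k+1)=0$ is an integer multiple of the primitive class
\[
w:=\tfrac{1}{\lambda_0}\big(d(k+1)\alpha-k\beta_{d+g-1}\big).
\]

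It remains to show $\pi^*\Theta=w$ exactly, with no extra factor and the correct sign. For the factor I would invoke point \eqref{Picard rank 2}: the isomorphism ${\rm Pic}(\mc Q_L)\cong{\rm Pic}(M^s_{k,L})\times\Z$ identifies the first summand with $\pi^*{\rm Pic}(M^s_{k,L})$, so $\pi^*$ carries the generator $\Theta$ to a primitive element of ${\rm Pic}(\mc Q_L)$, forcing $\pi^*\Theta=\pm w$. To fix the sign I would intersect with $D_3$. As $D_3\subset\mc Q^s_L$ and $\Theta$ is ample, $(\pi^*\Theta)\cdot D_3=\deg\big((\pi|_{D_3})^*\Theta\big)\geqslant 0$. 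On the other hand, using $\alpha\cdot D_3=r-k$ and $\beta_{d+g-1}\cdot D_3=d(r-k-1)$ from Lemma \ref{alpha dot D_3},
\[
w\cdot D_3=\frac{d}{\lambda_0}\big[(k+1)(r-k)-k(r-k-1)\big]=\frac{dr}{\lambda_0}>0,
\]
whereas $(-w)\cdot D_3<0$. Comparing signs rules out $-w$ and yields $\pi^*\Theta=w$, as claimed.

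The main obstacle is the primitivity step, i.e. the assertion that $\pi^*$ realizes the first factor of the splitting in \eqref{Picard rank 2}; everything else is a two-variable linear computation. Once primitivity is granted, the contraction of $D_4$ fixes the direction of $\pi^*\Theta$ and the nefness of $\Theta$ tested against $D_3$ fixes the sign.
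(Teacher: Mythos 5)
Your proof is correct and follows essentially the same route as the paper's: both use the contraction of $D_4$ to pin down the direction $a k + b d(k+1)=0$, the Picard-group structure from \cite[Theorem 8.9]{gs22} (that $\pi^*$ embeds ${\rm Pic}(M^s_{k,L})$ as the kernel of the restriction map to ${\rm Pic}(U_F)$, equivalently as a direct summand, so $\pi^*\Theta$ is the primitive generator of that kernel) to fix the scale, and the inequality $(\pi^*\Theta)\cdot D_3\geqslant 0$ from Lemma \ref{alpha dot D_3} to fix the sign. The step you flag as the ``main obstacle'' --- that $\pi^*$ realizes the first factor of the splitting in \eqref{Picard rank 2} --- is exactly what the paper justifies by recalling the commutative diagram from the proof of \cite[Theorem 8.9]{gs22}, so your argument matches the paper's in substance, differing only in whether primitivity is phrased via a direct summand or via generating the kernel of the map $\gamma\mapsto [\gamma]\cdot[D_4]$.
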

	\begin{proof}
		Let us write $\pi^*\Theta=a\alpha+b\beta_{d+g-1}$. Clearly,
		$[\pi^*\Theta]\cdot [D_4]=0$ as $\pi(D_4)=[F]$.
		This gives 
		$$ak+bd(k+1)=0\,.$$
		Thus, $\pi^*\Theta=\lambda'(-d(k+1)\alpha + k\beta_{d+g-1})$, for some 
		rational number $\lambda'$. We claim that $\lambda'\neq 0$. This is clear 
		from the point \eqref{Picard rank 2}, which says that the pullback of $\Theta$
		is not trivial. 
		
		Note that $[\pi(D_3)]\cdot [\Theta]\geqslant 0$ and so we get 
		$[\pi^*\Theta]\cdot [D_3]\geqslant 0$.  Using Lemma \ref{alpha dot D_3}
		we get 
		\begin{align*}
			[\pi^*\Theta]\cdot [D_3]&=\lambda'(-d(k+1)\alpha + k\beta_{d+g-1})\cdot [D_3]\\
			&=\lambda'(-d(k+1)(r-k)+kd(r-k-1))\\
			&=\lambda'(-dr)
		\end{align*}
		The condition that $[\pi^*\Theta]\cdot [D_3]\geqslant 0$ now forces 
		that $\lambda'<0$. Thus, we get that $\pi^*\Theta=\lambda(d(k+1)\alpha- k\beta_{d+g-1})$,
		where $\lambda>0$ is a rational number.
		
		We can determine the precise value of $\lambda$ as follows.
		First we need to recall some facts from \cite{gs22}.
		In the proof of \cite[Theorem 8.9]{gs22},  
		it is shown that there is a commutative diagram 
		\[\xymatrix{
			1\ar[r]& {\rm Pic}(M^s_{k,L\otimes \mc O_C(knP)})\ar[r]\ar@{=}[d] & {\rm Pic}(\mc Q^s_L)\ar[r]\ar@{=}[d] & \Z\ar[r]\ar@{^(->}[d] & 1\phantom{\,.}\\
			1\ar[r]& {\rm Pic}(M^s_{k,L\otimes \mc O_C(knP)})\ar[r] & {\rm Pic}(\mc Q^s_L)\ar[r] & {\rm Pic}(\theta^{-1}[F\otimes \mc O_C(nP)])\cong \Z
		}
		\]
		in which the rows are exact sequences. 
		We recall the map $\theta\colon \mc Q^s_L\to M^s_{k,L\otimes \mc O_C(knP)}$
		is defined as $\theta([\mc O_C^{\oplus r}\to F])=[F\otimes \mc O_C(nP)]$.
		The existence of the above diagram is proved using the same method as 
		described in the second paragraph of the proof 
		of \cite[Theorem 7.17]{gs22}. Fix a point $P\in C$
		and let $n\gg0$ be a fixed integer, as in the discussion 
		in the beginning of \S7 in \cite{gs22}. 
		Consider the isomorphism
		$\delta\colon  M^s_{k,L}\to M^s_{k,L\otimes \mc O_C(knP)}$ 
		given by $[F]\mapsto [F\otimes \mc O_C(nP)]$. Then $\theta=\delta\circ \pi$
		and so $\theta^{-1}([F\otimes \mc O_C(nP)])=\pi^{-1}([F])=U_F$.
		Thus, the lower row in the above diagram is identified with 
		$$1\to {\rm Pic}(M^s_{k,L})\to {\rm Pic}(\mc Q^s_L)\to {\rm Pic}(U_F)\cong \Z\,.$$
		Given an element $\gamma\in {\rm Pic}(\mc Q^s_L)$, its image 
		in ${\rm Pic}(U_F)\cong \Z$ is the intersection of a general
		line in $U_F$ with $\gamma$, that is, $[\gamma]\cdot [D_4]$.
		Thus, to compute $\pi^*\Theta$ in terms of $\alpha$ and $\beta_{d+g-1}$,
		we need to compute the generator of the kernel 
		of the map $\Z^{\oplus 2}\xrightarrow{\varphi} \Z$
		which sends $\varphi(1,0)=k$ and $\varphi(0,1)=d(k+1)$. 
		Let $\lambda_0:={\rm gcd}(k,d(k+1))={\rm gcd}(k,d)$. It is easily 
		checked that this generator, that is, $\pi^*\Theta$, equals 
		$$\pi^*\Theta=\frac{d(k+1)}{\lambda_0}\alpha-\frac{k}{\lambda_0}\beta\,.$$
		This completes the proof of the Lemma. 
	\end{proof}
	
	\begin{remark}
		As a corollary, we also get the following. 
		Since ${\rm Pic}(\mc Q^s_L)$ is generated by $\alpha$ 
		and $\beta_{d+g-1}$, it follows that the image of the restriction map 
		${\rm Pic}(\mc Q^s_L)\to {\rm Pic}(U_F)$ is generated 
		by ${\rm gcd}(k,d(k+1))={\rm gcd}(k,d)$. \hfill\qedsymbol
	\end{remark}
	
	As a corollary of Lemma \ref{half plane effective} and Lemma \ref{computing theta class}
	we get the following Corollary. 
	
	\begin{corollary}\label{boundary effective 1}
		The class $\pi^*\Theta$ is a boundary of the effective cone. 
	\end{corollary}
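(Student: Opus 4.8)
The plan is to deduce the statement from the two preceding lemmas together with an elementary convexity observation. Since $\mc Q_L$ has Picard rank $2$, the effective cone ${\rm Eff}(\mc Q_L)$ is a convex cone in the plane ${\rm N}^1(\mc Q_L)\otimes_\Z\R$. A class lies on the boundary of this cone exactly when it belongs to ${\rm Eff}(\mc Q_L)$ but not to its interior. I will show that $\pi^*\Theta$ is effective, that ${\rm Eff}(\mc Q_L)$ is contained in a closed half-plane, and that $\pi^*\Theta$ sits on the bounding line of that half-plane; these together force $\pi^*\Theta$ onto the boundary.

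First I would verify that $\pi^*\Theta$ is effective. The class $\Theta$ is the ample generator of ${\rm Pic}(M^s_{k,L})$, and a suitable positive multiple of $\Theta$ (indeed the generalized theta divisor) is represented by an effective divisor on $M^s_{k,L}$. Pulling back along $\pi\colon \mc Q^s_L\to M^s_{k,L}$ produces an effective divisor on $\mc Q^s_L$. Taking its closure in $\mc Q_L$ and using that ${\rm codim}(\mc Q_L\setminus \mc Q^s_L,\mc Q_L)\geqslant 2$ together with local factoriality of $\mc Q_L$, the isomorphism ${\rm Pic}(\mc Q_L)\cong {\rm Pic}(\mc Q^s_L)$ identifies its class with the one computed in Lemma \ref{computing theta class}, namely a positive multiple of $d(k+1)\alpha-k\beta_{d+g-1}$. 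Hence the ray through $\pi^*\Theta$ lies in ${\rm Eff}(\mc Q_L)$.

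Next, Lemma \ref{half plane effective} states that every effective class $a\alpha+b\beta_{d+g-1}$ satisfies $ak+bd(k+1)\geqslant 0$, which by Lemma \ref{alpha dot D_4} is exactly the condition $[a\alpha+b\beta_{d+g-1}]\cdot[D_4]\geqslant 0$. Thus ${\rm Eff}(\mc Q_L)$ is contained in the closed half-plane $H^+=\{x\in {\rm N}^1(\mc Q_L)\otimes_\Z\R : [x]\cdot[D_4]\geqslant 0\}$. Because $\pi(D_4)$ is the single point $[F]$, the restriction of $\pi^*\Theta$ to $D_4$ is trivial, so $[\pi^*\Theta]\cdot[D_4]=0$, placing $\pi^*\Theta$ on the bounding line $\partial H^+$. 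The conclusion now follows from the observation that a convex set contained in a closed half-plane $H^+$ has its interior contained in the open half-plane $\mathrm{int}(H^+)$: any interior point possesses a neighborhood inside $H^+$, which is impossible for a point of $\partial H^+$. Applying this to ${\rm Eff}(\mc Q_L)\subseteq H^+$ gives $\pi^*\Theta\notin \mathrm{int}\big({\rm Eff}(\mc Q_L)\big)$, and combined with effectivity this shows $\pi^*\Theta$ is a boundary class. The only step demanding genuine care is the effectivity in the second paragraph; the numerical inequality and the convexity argument are immediate once Lemmas \ref{half plane effective} and \ref{computing theta class} are in hand.
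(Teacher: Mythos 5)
Your proof is correct and follows essentially the same route as the paper: the paper likewise combines the effectivity of $\pi^*\Theta$, the half-plane inequality of Lemma \ref{half plane effective}, and the fact that the coefficients of $\pi^*\Theta$ from Lemma \ref{computing theta class} achieve equality $ak+bd(k+1)=0$. Your extra details (the theta-divisor argument for effectivity across the codimension-$\geqslant 2$ complement, and the explicit convexity observation) merely flesh out steps the paper treats as clear.
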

	\begin{proof}
		It is clear that $\pi^*\Theta$ is an effective divisor. 
		By Lemma \ref{half plane effective}, if $a\alpha+b\beta_{d+g-1}$
		is effective, then $a$ and $b$ satisfy the inequality 
		$ak+bd(k+1)\geqslant 0$. It is clear that the coefficients 
		$a$ and $b$ in $\pi^*\Theta$ satisfy the equality 
		$ak+bd(k+1)= 0$. Thus, the Corollary follows. 
	\end{proof}
	
	Recall the space $\mc Q'_L$ defined before Lemma \ref{codim Q'}.
	On $\mc Q'_L$ we have the map $\pi'\colon \mc Q'_L\to M^s_{r-k,L}$,
	which sends a quotient $[\mc O_C^{\oplus r}\xrightarrow{q}F]$
	to $[{\rm Ker}(q)]$. Arguing as in Lemma \ref{half plane effective},
	we have the following Lemma.

	\begin{lemma}\label{half plane effective-2}
		Let $a$ and $b$ be integers such that $a\alpha+b\beta_{d+g-1}$ 
		is an effective divisor. Then $a(r-k)+bd(r-k-1)\geqslant 0$. 
	\end{lemma}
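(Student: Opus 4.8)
The plan is to imitate the proof of Lemma \ref{half plane effective} essentially verbatim, replacing the map $\pi$, the moduli space $M^s_{k,L}$, and the curve $D_4$ by the map $\pi'$, the moduli space $M^s_{r-k,L}$, and the curve $D_3$, respectively. The numerical input that makes the two proofs differ only in their final line is the pair of intersection numbers computed in Lemma \ref{alpha dot D_3}, namely $[\alpha]\cdot[D_3]=r-k$ and $[\beta_{d+g-1}]\cdot[D_3]=d(r-k-1)$.

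First I would take an effective divisor $Y\subset \mc Q_L$ and write its class as $a\alpha+b\beta_{d+g-1}$, which is legitimate since these two classes generate $\mathrm{Pic}(\mc Q_L)$. Since $\mc Q_L\setminus \mc Q'_L$ has codimension $\geqslant 2$ by Lemma \ref{codim Q'}, the divisor $Y$ meets the open set $\mc Q'_L$ in a proper closed subset, so $Y$ cannot contain every fiber of the map $\pi'\colon \mc Q'_L\to M^s_{r-k,L}$. Consequently, for a general stable bundle $E$ of rank $r-k$ with $\det(E)=L^{-1}$, the corresponding fiber $U'\subset \mb P(\mathrm{Hom}(E,\mc O_C^{\oplus r})^\vee)$ satisfies $Y\cap U'\subsetneqq U'$.

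Next I would invoke the geometry set up just before Lemma \ref{alpha dot D_3}: $U'$ is an open subset of a projective space whose complement has codimension $\geqslant 2$ for general $E$, and a general line $D_3\subset U'$ defines the morphism $D_3\to \mc Q_L$ with image in $\mc Q'_L$ via the family \eqref{family D_3}. Because $Y\cap U'$ is a proper closed subset of $U'$, a general such line $D_3$ is not contained in $Y$, whence $[Y]\cdot[D_3]\geqslant 0$. Substituting the intersection numbers from Lemma \ref{alpha dot D_3} into $[Y]\cdot[D_3]=a\,[\alpha]\cdot[D_3]+b\,[\beta_{d+g-1}]\cdot[D_3]$ gives $a(r-k)+bd(r-k-1)\geqslant 0$, which is exactly the claimed inequality.

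The only step requiring genuine attention — the analogue of the sole subtle point in Lemma \ref{half plane effective} — is the implication "$Y$ does not contain all fibers of $\pi'$ $\Rightarrow$ the general line $D_3$ is not contained in $Y$." This rests on choosing $E$ generically so that, simultaneously, the non-stable-kernel locus in $\mb P(\mathrm{Hom}(E,\mc O_C^{\oplus r})^\vee)$ has codimension $\geqslant 2$ (so a general line $D_3$ genuinely lands in $U'\subset \mc Q'_L$) and the proper closed subset $Y\cap U'$ is avoided by the general line. I expect this to be the main, and essentially the only, obstacle, and it is already secured by the codimension estimates established before the construction of $D_3$.
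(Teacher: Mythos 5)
Your proposal is correct and follows exactly the route the paper takes: its proof of Lemma \ref{half plane effective-2} simply says to repeat the argument of Lemma \ref{half plane effective} with the map $\pi'\colon \mc Q'_L\to M^s_{r-k,L}$ and the curve $D_3$ in place of $\pi$ and $D_4$, obtaining $[Y]\cdot[D_3]\geqslant 0$ and concluding via the intersection numbers of Lemma \ref{alpha dot D_3}. Your more detailed justification of why a general line $D_3$ can be chosen inside $U'$ and not inside $Y$ is precisely the content implicit in the paper's two-line proof.
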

	\begin{proof}
		The proof is the same as that of Lemma \ref{half plane effective},
		except that we use the map $\pi'$ now. In this case we will have 
		the condition $[Y]\cdot [D_3]\geqslant 0$. The Lemma easily follows
		using Lemma \ref{alpha dot D_3}.
	\end{proof}
	
	Let ${\rm Pic}(M_{r-k,L})={\rm Pic}(M_{r-k,L}^s)=\Z[\Theta']$, where 
	$\Theta'$ is the unique ample generator. Similar to Lemma \ref{computing theta class},
	we have the following Lemma. 
	
	\begin{lemma}\label{computing theta class-2}
		Let $\lambda_1:={\rm gcd}(r-k,d(r-k-1))={\rm gcd}(r-k,d)$. Then
		$$\pi'^*\Theta'=\frac{1}{\lambda_1}(d(r-k-1)\alpha-(r-k)\beta_{d+g-1})\,.$$
	\end{lemma}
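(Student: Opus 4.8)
The plan is to run the argument in complete parallel with Lemma~\ref{computing theta class}, replacing $\pi,\Theta,M^s_{k,L}$ by $\pi',\Theta',M^s_{r-k,L}$ and interchanging the roles of the two test curves $D_3$ and $D_4$. Two geometric facts drive it. First, the kernel in the family~\eqref{family D_3} defining $D_3$ is $p_C^*E\otimes p_2^*\mc O_{D_3}(-1)$, whose restriction to $C\times\{t\}$ is isomorphic to the fixed stable bundle $E$ for every $t$; hence $\pi'(D_3)=[E]$ is a single point, and $[\pi'^*\Theta']\cdot[D_3]=0$. Second, the kernels $\mc K_{4,t}$ in the family~\eqref{family D_4} defining $D_4$ are pairwise non-isomorphic stable bundles, so $\pi'(D_4)$ is a genuine (non-constant) curve in $M^s_{r-k,L}$.

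Writing $\pi'^*\Theta'=a\alpha+b\beta_{d+g-1}$ and intersecting with $D_3$, Lemma~\ref{alpha dot D_3} gives the relation $a(r-k)+bd(r-k-1)=0$. Thus $\pi'^*\Theta'$ lies on the line spanned by $d(r-k-1)\alpha-(r-k)\beta_{d+g-1}$, and by point~\eqref{Picard rank 2} it is a nonzero rational multiple of this class. To fix the sign of the multiple I would intersect with $D_4$: since $\Theta'$ is ample we have $[\pi'(D_4)]\cdot[\Theta']\geqslant 0$, hence $[\pi'^*\Theta']\cdot[D_4]\geqslant 0$ by the projection formula, and evaluating through Lemma~\ref{alpha dot D_4} selects the sign giving $\pi'^*\Theta'=\lambda\bigl(d(r-k-1)\alpha-(r-k)\beta_{d+g-1}\bigr)$ with $\lambda>0$.

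To pin down $\lambda$ I would use the same device as in Lemma~\ref{computing theta class}. The restriction sequence of \cite{gs22} identifies, for general stable $E$ of rank $r-k$, the image of a class $\gamma\in{\rm Pic}(\mc Q^s_L)$ in ${\rm Pic}(\pi'^{-1}([E]))\cong\Z$ with the intersection number $[\gamma]\cdot[D_3]$. Hence the primitive generator of $\pi'^*{\rm Pic}(M^s_{r-k,L})$ is the generator of the kernel of $\Z^{\oplus 2}\to\Z$, $(a,b)\mapsto a(r-k)+bd(r-k-1)$, which with $\lambda_1={\rm gcd}(r-k,d(r-k-1))={\rm gcd}(r-k,d)$ equals $\tfrac{1}{\lambda_1}\bigl(d(r-k-1),-(r-k)\bigr)$; this gives the claimed formula. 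The step I expect to need the most care is the sign determination in the preceding paragraph: because $D_3$ and $D_4$ meet $\alpha$ and $\beta_{d+g-1}$ with different profiles (Lemmas~\ref{alpha dot D_3} and~\ref{alpha dot D_4}), one must pin the sign using $D_4$ rather than $D_3$ and track the interchange of roles relative to Lemma~\ref{computing theta class} to arrive at the stated orientation.
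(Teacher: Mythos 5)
Your strategy is exactly the paper's: use $D_3$ (constant kernel $E$, so $\pi'(D_3)=[E]$) to get the relation $a(r-k)+bd(r-k-1)=0$, use $D_4$ together with ampleness of $\Theta'$ and the projection formula to pin the sign, and use the restriction sequence from \cite{gs22} to identify $\pi'^*\Theta'$ with the primitive generator of the kernel of $(a,b)\mapsto a(r-k)+bd(r-k-1)$. However, the sign-determination step --- the very step you flag as needing the most care --- is asserted rather than computed, and the computation goes the other way. By Lemma \ref{alpha dot D_4} one has
\begin{equation*}
\bigl[d(r-k-1)\alpha-(r-k)\beta_{d+g-1}\bigr]\cdot[D_4]
= d\bigl(k(r-k-1)-(r-k)(k+1)\bigr)=-dr<0\,,
\end{equation*}
so if you write $\pi'^*\Theta'=\lambda\bigl(d(r-k-1)\alpha-(r-k)\beta_{d+g-1}\bigr)$, the constraint $[\pi'^*\Theta']\cdot[D_4]\geqslant 0$ forces $\lambda\leqslant 0$, contradicting your claim that it ``selects'' $\lambda>0$ for this orientation. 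Carried out honestly, your argument proves $\pi'^*\Theta'=\tfrac{1}{\lambda_1}\bigl(-d(r-k-1)\alpha+(r-k)\beta_{d+g-1}\bigr)$.

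This is in fact what the paper's own proof concludes: there one writes $\pi'^*\Theta'=\lambda'\bigl(-d(r-k-1)\alpha+(r-k)\beta_{d+g-1}\bigr)$, computes $[\pi'^*\Theta']\cdot[D_4]=\lambda' dr$, deduces $\lambda'>0$, and ends with $\pi'^*\Theta'=\tfrac{-d(r-k-1)}{\lambda_1}\alpha+\tfrac{r-k}{\lambda_1}\beta_{d+g-1}$. The displayed statement of the lemma carries a sign typo (it is the negative of what the proof establishes); the correct orientation is confirmed by Theorem \ref{main theorem effective cone}, where the corresponding boundary of the effective cone is $-d(r-k-1)\alpha+(r-k)\beta_{d+g-1}$, and by the a priori fact that a pullback of an ample class cannot meet the effective curve $D_4$ negatively. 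So your error is not in the method but in forcing the final sign to match the typo'd statement instead of your own test; a correct write-up should either prove the sign-corrected formula or note the discrepancy explicitly. (Your other steps --- nonvanishing of the multiple via point \eqref{Picard rank 2}, and identifying the image of $\gamma\in{\rm Pic}(\mc Q^s_L)$ in ${\rm Pic}(\pi'^{-1}([E]))\cong\Z$ with $[\gamma]\cdot[D_3]$, a general line in the fiber --- match the paper and are fine; note also that a generator of the kernel of $\Z^{\oplus 2}\to\Z$ is only defined up to sign, which is precisely why the $D_4$ computation must be done, not assumed.)
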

	\begin{proof}
		Let us write $\pi'^*\Theta'=a\alpha+b\beta_{d+g-1}$. 
		Recall the curve $D_3$ defined using the family \eqref{family D_3}.
		Clearly, $[\pi'^*\Theta']\cdot [D_3]=0$ as $\pi'(D_3)=[E]$.
		This gives 
		$$a(r-k)+bd(r-k-1)=0\,.$$
		Thus, $\pi'^*\Theta'=\lambda'(-d(r-k-1)\alpha + (r-k)\beta_{d+g-1})$, for some 
		rational number $\lambda'$. As in Lemma \ref{computing theta class}, 
		we have that $\lambda'\neq0$. 
		Note that $[\pi'(D_4)]\cdot [\Theta']\geqslant 0$ and so we get 
		$[\pi'^*\Theta']\cdot [D_4]\geqslant 0$.  Using Lemma \ref{alpha dot D_4}
		we get 
		\begin{align*}
			[\pi'^*\Theta']\cdot [D_4]&=\lambda'(-d(r-k-1)\alpha + (r-k)\beta_{d+g-1})\cdot [D_4]\\
			&=\lambda'(-d(r-k-1)k+(r-k)d(k+1))\\
			&=\lambda'(dr)
		\end{align*}
		The condition that $[\pi'^*\Theta']\cdot [D_4]\geqslant 0$ now forces 
		that $\lambda'>0$. Thus, we get that $\pi'^*\Theta'=\lambda(-d(r-k-1)\alpha + (r-k)\beta_{d+g-1})$,
		where $\lambda>0$ is a rational number.
		Arguing as in Lemma \ref{computing theta class}, we get 
		$$\pi'^*\Theta'=\frac{-d(r-k-1)}{\lambda_1}\alpha+\frac{r-k}{\lambda_1}\beta_{d+g-1}\,.$$
		This completes the proof of the Lemma.
	\end{proof}
	
	As a corollary of Lemma \ref{half plane effective-2} and Lemma \ref{computing theta class-2}
	we get the following Corollary. 
	
	\begin{corollary}
		The class $\pi'^*\Theta'$ is a boundary of the effective cone. 
	\end{corollary}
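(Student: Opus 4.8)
The plan is to mirror the proof of Corollary \ref{boundary effective 1}, replacing the pair $(\pi,\Theta)$ by $(\pi',\Theta')$. There are two ingredients: that $\pi'^*\Theta'$ is an effective class, and that it saturates the supporting inequality furnished by Lemma \ref{half plane effective-2}.

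First I would verify that $\pi'^*\Theta'$ is effective. Since $\Theta'$ is the ample generator of ${\rm Pic}(M^s_{r-k,L})$, a positive multiple of it carries a nonzero section, so $\Theta'$ may be taken effective; as the morphism $\pi'\colon \mc Q'_L\to M^s_{r-k,L}$ is dominant, the pullback of an effective divisor is effective on $\mc Q'_L$. By Lemma \ref{codim Q'} the complement $\mc Q_L\setminus \mc Q'_L$ has codimension at least $2$, so this effective divisor extends to an effective divisor on all of $\mc Q_L$ whose class is $\pi'^*\Theta'$ by construction.

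Next I would record the supporting half-plane. Lemma \ref{half plane effective-2} asserts that every effective class $a\alpha+b\beta_{d+g-1}$ satisfies $a(r-k)+bd(r-k-1)\geqslant 0$, so the functional $(a,b)\mapsto a(r-k)+bd(r-k-1)$ is nonnegative on ${\rm Eff}(\mc Q_L)$ and cuts out one of its supporting half-planes. By Lemma \ref{computing theta class-2} the class $\pi'^*\Theta'$ is a positive rational multiple of $-d(r-k-1)\alpha+(r-k)\beta_{d+g-1}$, and for these coefficients one has $a(r-k)+bd(r-k-1)=-d(r-k-1)(r-k)+(r-k)d(r-k-1)=0$. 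Thus $\pi'^*\Theta'$ lies precisely on the boundary of this half-plane; being effective, it must be a boundary ray of ${\rm Eff}(\mc Q_L)$, which is the assertion.

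There is essentially no serious obstacle, the argument being the exact mirror of Corollary \ref{boundary effective 1}. The only step requiring a little care is the first: confirming that $\pi'^*\Theta'$ is genuinely an effective class rather than a merely formally defined one. This is where dominance of $\pi'$ together with the codimension estimate of Lemma \ref{codim Q'} enter, guaranteeing that effectivity is transported from $M^s_{r-k,L}$ to $\mc Q'_L$ and then preserved when the divisor is extended across the codimension-$\geqslant 2$ locus to all of $\mc Q_L$.
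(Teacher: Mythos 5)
Your proposal is correct and is essentially the paper's own argument: it mirrors the proof of Corollary \ref{boundary effective 1}, combining effectivity of $\pi'^*\Theta'$ (which you justify carefully via dominance of $\pi'$ and extension across the codimension-$\geqslant 2$ complement from Lemma \ref{codim Q'}) with the fact that its coefficients saturate the supporting inequality of Lemma \ref{half plane effective-2}. Note also that you correctly took $\pi'^*\Theta'$ to be a positive multiple of $-d(r-k-1)\alpha+(r-k)\beta_{d+g-1}$, which is what the proof of Lemma \ref{computing theta class-2} actually establishes (the displayed formula in that lemma's statement has the signs reversed).
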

	
	Thus, combining the above results, we have the following Theorem, which is 
	part \ref{intro 2} of Theorem \ref{thm intro}
	
	\begin{theorem}\label{main theorem effective cone}
		Assume one of the following two holds:
		\begin{itemize}
			\item $g(C)\geqslant 3$ and $2\leqslant k\leqslant r-2$, or
			\item $g(C)=2$ and $3\leqslant k\leqslant r-2$.
		\end{itemize}
		Let $d\gg0$. 
		The effective cone of $\mc Q_L$ is spanned by non-negative
		linear combinations of the classes
		$d(k+1)\alpha-k\beta_{d+g-1}$ and $-d(r-k-1)\alpha+(r-k)\beta_{d+g-1}$.
		Further, ${\rm Mov}(\mc Q_L)={\rm Eff}(\mc Q_L)$.
	\end{theorem}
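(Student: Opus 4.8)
The plan is to assemble the two half-plane bounds already established into a complete description of $\mathrm{Eff}(\mc Q_L)$, and then upgrade this to the movable cone using the codimension-two estimates.

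First I would invoke Lemma \ref{half plane effective} and Lemma \ref{half plane effective-2}, which together force any effective class $a\alpha + b\beta_{d+g-1}$ to satisfy both $ak + bd(k+1)\geq 0$ and $a(r-k)+bd(r-k-1)\geq 0$, so that $\mathrm{Eff}(\mc Q_L)$ lies in the intersection of these two half-planes in $\mathrm{N}^1(\mc Q_L)\otimes\R$. The key elementary observation is that this intersection is precisely the closed cone spanned by the two rays $R_1 = d(k+1)\alpha - k\beta_{d+g-1}$ and $R_2 = -d(r-k-1)\alpha + (r-k)\beta_{d+g-1}$. Each $R_i$ lies on the boundary line of one of the inequalities, and substituting $R_1$ into the second inequality (and $R_2$ into the first) yields in both cases the value $dr>0$. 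Thus each ray lies strictly on the interior side of the opposite inequality, so the two rays are genuine edges of a sector; the determinant of their direction vectors is $dr\neq 0$, confirming the sector is two-dimensional.

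To close the effective-cone computation I would note that both edges are themselves effective. By Lemma \ref{computing theta class} the class $\pi^*\Theta$ is a positive rational multiple of $R_1$, and since $\Theta$ is ample on $M^s_{k,L}$ and $\pi$ is a morphism on $\mc Q^s_L$, this class is effective, as recorded in Corollary \ref{boundary effective 1}. Symmetrically, Lemma \ref{computing theta class-2} identifies $\pi'^*\Theta'$ as a positive multiple of $R_2$, which is effective by the same reasoning. Convexity of $\mathrm{Eff}(\mc Q_L)$ then yields the reverse containment, so $\mathrm{Eff}(\mc Q_L)$ equals the sector spanned by $R_1$ and $R_2$.

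For the final assertion $\mathrm{Mov}(\mc Q_L)=\mathrm{Eff}(\mc Q_L)$, since one always has $\mathrm{Mov}\subseteq\mathrm{Eff}$, it suffices to prove that $R_1$ and $R_2$ are movable, after which convexity propagates movability to the entire sector. Here I would argue that a large multiple of the ample class $\Theta$ is base-point-free on $M^s_{k,L}$, so pulling back along $\pi\colon\mc Q^s_L\to M^s_{k,L}$ produces sections of a multiple of $\pi^*\Theta$ with empty common zero locus on $\mc Q^s_L$; because $\mc Q_L$ is normal and $\mathrm{codim}(\mc Q_L\setminus\mc Q^s_L,\mc Q_L)\geq 2$, these sections extend to $\mc Q_L$ with base locus contained in the codimension-two complement, so $R_1$ carries no fixed divisorial component and is movable. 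The same argument applied to $\pi'\colon\mc Q'_L\to M^s_{r-k,L}$, now using the codimension bound of Lemma \ref{codim Q'}, shows $R_2$ is movable. I expect the only genuinely delicate point to be this last extension-of-sections step: one must verify that the base-point-free sections on the open locus extend to global sections on $\mc Q_L$ and that their base locus stays confined to the codimension-two complement, which rests on the normality of $\mc Q_L$ together with the codimension estimates cited above; everything else is the routine half-plane bookkeeping.
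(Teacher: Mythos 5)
Your proposal is correct and follows essentially the same route as the paper: the two half-plane lemmas give the outer bound, Lemmas \ref{computing theta class} and \ref{computing theta class-2} together with the effectivity of $\pi^*\Theta$ and $\pi'^*\Theta'$ give the inner bound, and movability of the extremal rays follows because these classes are pulled back along morphisms defined on open sets whose complements have codimension $\geqslant 2$. Your write-up merely makes explicit the elementary cone geometry (substituting each ray into the opposite inequality to get $dr>0$) and the section-extension step that the paper's terse proof leaves implicit.
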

	\begin{proof}
		Clearly, ${\rm Mov}(\mc Q_L)\subset {\rm Eff}(\mc Q_L)$. Since the boundaries 
		of ${\rm Eff}(\mc Q_L)$, namely $\pi^*\Theta$ and $\pi'^*\Theta'$, 
		define morphisms on the open subset $\mc Q_L'$, whose complement in 
		$\mc Q_L$ has codimension $\geqslant2$, it follows that these boundaries
		are in ${\rm Mov}(\mc Q_L)$. Thus, equality follows. 
	\end{proof}

	\section{Canonical divisor}
	In this section we shall determine the canonical divisor of $\mc Q_L$
	in terms of $\alpha$ and $\beta$.

	Let $\omega_C$ denote the canonical divisor of $C$.
	Consider the open subset $\mc Q_g\subset \mc Q$ 
	consisting of quotients $[\mc O_C^{\oplus r}\to F]$
	for which $h^1(C,F)=0$. If $h^1(C,F)=0$, then applying ${\rm Hom}(-,F)$
	to the short exact sequence $0\to K\to \mc O_C^{\oplus r}\to F\to 0$,
	it follows that ${\rm ext}^1(K,F)=0$. Thus, $\mc Q_g$ is contained in the 
	smooth locus of $\mc Q$. Recall that $\mc Q^s$ denoted the open subset 
	consisting of quotients $[\mc O_C^{\oplus r}\to F]$ for which $F$ is stable.
	Clearly, $\mc Q^s\subset \mc Q_g$ as $d\gg0$. Using Lemma 2.7 and equation (6.4)
	in \cite{gs22}, it follows that the morphism ${\rm det}\colon \mc Q_g\to {\rm Pic}^d(C)$ 
	is a smooth morphism. It follows that the locus $\mc Q_{g,L}=\mc Q_L\cap \mc Q_g$
	is contained in the smooth locus of $\mc Q_L$. 
	As ${\rm codim}(\mc Q_L\setminus \mc Q_L^s,\mc Q_L)\geqslant 2$, it follows
	that ${\rm codim}(\mc Q_L\setminus \mc Q_{g,L},\mc Q_L)\geqslant 2$.
	Thus, to determine the canonical divisor of $\mc Q_L$, it suffices to determine
	the canonical divisor of $\mc Q_{g,L}$. As the morphism ${\rm det}$ is smooth on $\mc Q_g$,
	and the canonical divisor of ${\rm Pic}^d(C)$ is trivial, it follows easily from
	the exact sequence (det being the morphism in \eqref{def det})
	$$0\to {\rm det}^*\Omega_{{\rm Pic}^d(C)}\to \Omega_{\mc Q_g}\to \Omega_{\rm det}\to 0$$
	that 
	\begin{equation}
		{\rm det}(\Omega_{\mc Q_{g,L}})={\rm det}(\Omega_{\rm det}\vert_{\mc Q_{g,L}})={\rm det}(\Omega_{\mc Q_g})\vert_{\mc Q_{g,L}}\,.
	\end{equation}
	
	Recall the universal sequence \eqref{universal seq} on $C\times \mc Q$.
	Using the same method as in \cite[Theorem 7.1]{Str}, we may show 
	that the tangent bundle on $\mc Q_g$ equals $p_{2*}(\mc K^\vee\otimes \mc F)$.
	It easily follows that 
	\begin{equation}\label{a4}
		\Omega_{\mc Q_g}\vert_{\mc Q_{g,L}}=(p_{2*}(\mc K^\vee\otimes \mc F))^\vee\,,
	\end{equation}
	where we use the same notation to denote the restriction of the sheaves
	$\mc K, \mc F$ to $C\times \mc Q_{g,L}$.
	Thus, the canonical divisor of $\mc Q_{g,L}$ equals the determinant of the locally 
	free sheaf $(p_{2*}(\mc K^\vee\otimes \mc F))^\vee$.
	
	To compute the canonical divisor in terms of $\alpha$ and $\beta$,
	we need two curves in $\mc Q_{g,L}$. One of these is the curve 
	$D_1$, given by the family \eqref{family D_1}.
	Let us check that the image of $D_1$ is contained in $\mc Q_{g,L}$. 
	Recall from \cite[Corollary 6.3]{PR} that when $d\gg0$, and $K'$ is a general stable 
	bundle, then the cokernel of the general inclusion $K'\to \mc O_C^{\oplus r}$ 
	is a stable bundle. In particular, we may assume that both $K'$ and 
	$F'$ in \eqref{family D_1} are stable bundles. For
	each $t\in D_1$, the quotient $\mc G_{1,t}\cong F'\oplus \mb C_x$
	and so 
	$$h^1(C,\mc G_{1,t})=0\,.$$
	It follows that the image of $D_1$ is contained in $\mc Q_{g,L}$. 
	
	Our second curve is the curve $D_3$ given by the family \eqref{family D_3}.
	Recall the space $\mc Q'_L$ from \eqref{def Q'}.
	We had seen that the image of $D_3\to \mc Q_L$ is contained in $\mc Q'_L$.
	Also note that $\mc Q'_L\subset \mc Q_{g,L}$. 
	Thus, the curves $D_1$ and $D_3$ are contained in $\mc Q_{g,L}$. Next we 
	will compute the degree of the line bundle ${\rm det}(p_{2*}(\mc K^\vee\otimes \mc F))$ 
	restricted to $D_1$ and $D_3$. 
	\begin{lemma}\label{omega dot D_1}
		The degree of ${\rm det}(p_{2*}(\mc K^\vee\otimes \mc F))$ 
		restricted to $D_1$ is $r-2k$. 
	\end{lemma}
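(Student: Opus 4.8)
The plan is to reduce the statement to a Grothendieck--Riemann--Roch computation on the smooth surface $C\times D_1$. First I would record that the morphism $D_1\to \mc Q_L$ defined by \eqref{family D_1} pulls the universal sequence \eqref{universal seq} back to that family, so that the restriction of $\mc K$ to $C\times D_1$ is $\t K_1$ and the restriction of $\mc F$ is $\mc G_1$. Since $D_1\subset \mc Q_{g,L}$, at every point one has $H^1(C,\mc K_q^\vee\otimes\mc F_q)=\mathrm{Ext}^1(\mc K_q,\mc F_q)=0$; hence by Remark \ref{pull-push} the sheaf $p_{2*}(\mc K^\vee\otimes\mc F)$ is locally free and its formation commutes with the base change $C\times D_1\to C\times\mc Q_{g,L}$. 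Consequently its restriction to $D_1$ equals $p_{2*}(\t K_1^\vee\otimes\mc G_1)$, and the quantity to compute is $\deg_{D_1}\det p_{2*}(\t K_1^\vee\otimes\mc G_1)=\int_{D_1}\ch_1\big(p_{2!}(\t K_1^\vee\otimes\mc G_1)\big)$, the last equality holding because $R^1p_{2*}$ of this sheaf vanishes. I would also note that $\t K_1$, being flat over $D_1$ with locally free restriction to each fibre $C$, is locally free on the surface $C\times D_1$ (a standard lifting argument over the regular base $D_1$), so that $\t K_1^\vee$ is the honest dual and $\ch(\t K_1^\vee)$ is obtained from $\ch(\t K_1)$ by changing the sign of the odd components.

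Next I would compute the Chern characters of $\t K_1$ and $\mc G_1$ from the three short exact sequences in the diagram \eqref{family D_1}. Writing $\xi=p_C^*[\mathrm{pt}]$ and $\zeta=p_2^*[\mathrm{pt}]$, so that $\xi^2=\zeta^2=0$ and $\xi\zeta$ is the class of a point, the three inputs are $\ch(p_C^*K')=(r-k)+(1-d)\xi$ (using $\det K'=L^{-1}\otimes\mc O_C(x)$, of degree $1-d$), $\ch(p_C^*F')=k+(d-1)\xi$, and $\ch\big(\iota_{x*}\mc O_{D_1}(1)\big)=\xi+\xi\zeta$. The last one I would obtain by twisting the structure sequence of the divisor $\Delta:=\{x\}\times D_1$ (whose class is $\xi$) by $p_2^*\mc O_{D_1}(1)$, giving $\ch(\iota_{x*}\mc O_{D_1}(1))=(1+\zeta)(1-e^{-\xi})=\xi+\xi\zeta$. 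From the left column $0\to\t K_1\to p_C^*K'\to\iota_{x*}\mc O_{D_1}(1)\to0$ and the induced row $0\to\iota_{x*}\mc O_{D_1}(1)\to\mc G_1\to p_C^*F'\to0$ I then read off, in $K$-theory, $\ch(\t K_1)=(r-k)-d\,\xi-\xi\zeta$ and $\ch(\mc G_1)=k+d\,\xi+\xi\zeta$, whose sum is $r$ as a consistency check.

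Finally I would assemble the answer. Dualising gives $\ch(\t K_1^\vee)=(r-k)+d\,\xi-\xi\zeta$, and multiplying out (killing all terms in $\xi^2$) yields $\ch(\t K_1^\vee\otimes\mc G_1)=(r-k)k+dr\,\xi+(r-2k)\,\xi\zeta$. Since the relative tangent sheaf of $p_2$ is $p_C^*T_C$, with $\mathrm{td}=1+(1-g)\xi$, Grothendieck--Riemann--Roch gives $\ch\big(p_{2!}(\t K_1^\vee\otimes\mc G_1)\big)=p_{2*}\big[\big((r-k)k+dr\,\xi+(r-2k)\xi\zeta\big)(1+(1-g)\xi)\big]$. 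Using $p_{2*}\xi=1$ and $p_{2*}(\xi\zeta)=[\mathrm{pt}_{D_1}]$, the degree-one part is $(r-2k)[\mathrm{pt}_{D_1}]$, so the degree is $r-2k$, as claimed. The only genuinely delicate points are the justification that $p_{2*}(\mc K^\vee\otimes\mc F)$ commutes with base change to $D_1$ (handled by the $\mathrm{Ext}^1$-vanishing valid on $\mc Q_{g,L}$) and the Chern character of the divisor-supported sheaf $\iota_{x*}\mc O_{D_1}(1)$; everything else is bookkeeping in the truncated Chow ring of $C\times D_1$.
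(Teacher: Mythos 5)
Your proof is correct, but it takes a genuinely different route from the paper. You and the paper share the same first reduction: since the image of $D_1$ lies in $\mc Q_{g,L}$, the vanishing ${\rm ext}^1(\mc K_q,\mc F_q)=0$ plus cohomology and base change identifies the restriction of $p_{2*}(\mc K^\vee\otimes\mc F)$ to $D_1$ with $p_{2*}(\t K_1^\vee\otimes \mc G_1)$. From there the paper stays sheaf-theoretic: it applies $p_{2*}(\t K_1^\vee\otimes -)$ to the filtration $0\to \iota_{x*}(\mc O_{D_1}(1))\to \mc G_1\to p_C^*F'\to 0$, computes ${\rm det}\,p_{2*}(\t K_1^\vee\otimes \iota_{x*}\mc O_{D_1}(1))\cong \mc O_{D_1}(r-k)$ and ${\rm det}\,p_{2*}(\t K_1^\vee\otimes p_C^*F')\cong \mc O_{D_1}(-k)$ by applying $\mathscr{H}om(-,\cdot)$ to the defining sequence of $\t K_1$ and identifying the resulting $\mathscr{E}xt$ sheaves as skyscrapers along $\{x\}\times D_1$; adding degrees gives $r-2k$, and in fact the paper pins down the actual line bundle $\mc O_{D_1}(r-2k)$, not merely its degree. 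You instead run Grothendieck--Riemann--Roch on the smooth surface $C\times D_1$: your Chern character bookkeeping is correct throughout ($\ch(\t K_1)=(r-k)-d\xi-\xi\zeta$, $\ch(\mc G_1)=k+d\xi+\xi\zeta$, product $(r-k)k+dr\xi+(r-2k)\xi\zeta$, Todd factor $1+(1-g)\xi$), the two delicate inputs you flag are handled properly (local freeness of $\t K_1$, which is needed to dualize the Chern character and which you justify by flatness over $D_1$ plus local freeness on fibres, and $\ch(\iota_{x*}\mc O_{D_1}(1))=\xi+\xi\zeta$ via the structure sequence of the divisor $\{x\}\times D_1$), and the use of $R^1p_{2*}=0$ to equate $\ch_1(p_{2!})$ with $\deg\det p_{2*}$ is valid for the same reason as the base-change step. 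What each approach buys: the paper's argument is elementary (no GRR) and yields the isomorphism class of the restricted determinant bundle; yours is more mechanical and more robust --- the same computation would handle any family presented by exact sequences on $C\times D_1$ without devising ad hoc $\mathscr{E}xt$ identifications, and it comes with the built-in consistency checks ($\ch(\t K_1)+\ch(\mc G_1)=r$, rank of the pushforward equal to $\chi=rd+k(r-k)(1-g)$) that you correctly exploit.
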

	\begin{proof}
		As ${\rm ext}^1(K,F)=0$ for a point 
		$[K\subset \mc O_C^{\oplus r}\to F]\in \mc Q_g$, it follows easily that 
		(see \eqref{family D_1})
		$$p_{2*}(\mc K^\vee\otimes \mc F)\vert_{D_1}=p_{2*}(\t K_1^\vee\otimes \mc G_1)\,.$$
		From \eqref{family D_1} it follows that we have the following short exact 
		sequence on $C\times D_1$
		$$0\to \iota_{x*}(\mc O_{D_1}(1))\to \mc G_1\to p_C^*F'\to 0\,.$$
		For ease of notation, let $\mc T$ denote the sheaf $\iota_{x*}(\mc O_{D_1}(1))$. 
		Applying $p_{2*}(\t K_1^\vee\otimes -)$ we get the short exact sequence 
		\begin{equation}\label{a1}
			0\to p_{2*}(\t K_1^\vee\otimes \mc T) \to 
			p_{2*}(\t K_1^\vee\otimes \mc G_1) \to p_{2*}(\t K_1^\vee\otimes p_C^*F')\to 0\,.
		\end{equation}
		Let us first compute determinant of the sheaf 
		$p_{2*}(\t K_1^\vee\otimes \mc T)$. 
		Apply ${\ms H}om(-,\mc T)$ to the short exact sequence 
		$0\to \t K_1 \to p_C^*K'\to \mc T\to 0$ (see \eqref{family D_1}) yields 
		the long exact sequence 
		\begin{equation}\label{a5}
			0\to {\ms H}om(\mc T,\mc T)\to \mc T^{\oplus (r-k)}\to \t K_1^\vee\otimes \mc T
			\to {\ms E}xt^1(\mc T,\mc T)\to 0\,.
		\end{equation}
		Applying ${\ms H}om(-,\mc T)$ to the short exact sequence 
		\begin{equation}\label{ses Tau}
			0\to p_C^*\mc O_C(-x)\otimes p_2^*\mc O_{D_1}(1)\to p_2^*\mc O_{D_1}(1)\to \mc T\to 0\,,
		\end{equation}
		one easily checks that the sheaves ${\ms H}om(\mc T,\mc T)$ and 
		${\ms E}xt^1(\mc T,\mc T)$ are isomorphic to $\iota_{x*}(\mc O_{D_1})$.
		As all the sheaves in \eqref{a5} are coherent over $D_1$, applying $p_{2*}$ 
		we get the following exact sequence of sheaves on $D_1$
		$$0\to \mc O_{D_1}\to \mc T^{\oplus (r-k)}\to p_{2*}(\t K_1^\vee\otimes \mc T)
		\to \mc O_{D_1}\to 0\,.$$
		From this it follows that 
		\begin{equation}\label{a3}
			{\rm det}(p_{2*}(\t K_1^\vee\otimes \mc T))\cong \mc O_{D_1}(r-k)\,.
		\end{equation}
		Next let us compute the determinant of the sheaf 
		$p_{2*}(\t K_1^\vee\otimes p_C^*F')$. 
		For this we apply ${\ms H}om(-,p_C^*F')$ to the short exact sequence 
		$$0\to \t K_1\to p_C^*K'\to \mc T\to 0\,.$$
		We get the following long exact sequence on $C\times D_1$
		\begin{equation}\label{a2}
			0\to p_C^*K'^\vee\otimes p_C^*F'\to \t K_1^\vee\otimes p_C^*F'
			\to {\ms E}xt^1(\mc T,p_C^*F')\to 0\,.	
		\end{equation}
		The last term equals 
		\begin{align*}
			{\ms E}xt^1(\mc T,p_C^*F')&\cong {\ms E}xt^1(\mc T\otimes p_C^*F',\mc O_{C\times D_1})\\
			&\cong {\ms E}xt^1(\mc T,\mc O_{C\times D_1})^{\oplus k}
		\end{align*}
		Applying ${\ms H}om(-,\mc O_{C\times D_1})$ to \eqref{ses Tau} one easily sees
		that ${\ms E}xt^1(\mc T,\mc O_{C\times D_1})\cong \iota_{x*}(\mc O_{D_1}(-1))$.
		Note that $h^1(K'^\vee \otimes F')=0$ as both $K'$ and $F'$ are stable. 
		Applying $p_{2*}$ to \eqref{a2}, we get the following 
		exact sequence
		$$0\to {\rm Hom}(K',F')\to p_{2*}(\t K_1^\vee\otimes p_C^*F')\to \mc O_{D_1}(-1)^{\oplus k}\to 0\,.$$
		It follows that ${\rm det}(p_{2*}(\t K_1^\vee\otimes p_C^*F'))\cong \mc O_{D_1}(-k)$.
		Using this and equation \eqref{a3} in \eqref{a1}, we get 
		${\rm det}(p_{2*}(\t K_1^\vee\otimes \mc G_1))=\mc O_{D_1}(r-2k)$.
	\end{proof}

	\begin{lemma}\label{omega dot D_3}
		The degree of ${\rm det}(p_{2*}(\mc K^\vee\otimes \mc F))$ 
		restricted to $D_3$ is $r(d+(r-k)(1-g))$. 
	\end{lemma}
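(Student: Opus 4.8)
The plan is to mimic the proof of Lemma~\ref{omega dot D_1}. Since the image of $D_3$ lies in $\mc Q'_L\subset \mc Q_{g,L}$, for every $t\in D_3$ one has $\mathrm{ext}^1(\mc K_t,\mc F_t)=0$; hence by Remark~\ref{pull-push} the sheaf $p_{2*}(\mc K^\vee\otimes \mc F)$ is locally free and its formation commutes with base change, so its restriction to $D_3$ equals $p_{2*}(\mc K^\vee\otimes \mc G_3)$, which I will compute on $C\times D_3$ using the family~\eqref{family D_3}. There the kernel is the locally free sheaf $\mc K=p_C^*E\otimes p_2^*\mc O_{D_3}(-1)$, so $\mc K^\vee=p_C^*E^\vee\otimes p_2^*\mc O_{D_3}(1)$, and the task reduces to finding $\deg\det p_{2*}(\mc K^\vee\otimes \mc G_3)$.

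First I would tensor~\eqref{family D_3} by the locally free sheaf $\mc K^\vee$, obtaining on $C\times D_3$ the short exact sequence
\[0\to p_C^*(E^\vee\otimes E)\to p_C^*(E^\vee)^{\oplus r}\otimes p_2^*\mc O_{D_3}(1)\to \mc K^\vee\otimes \mc G_3\to 0\,,\]
in which the $\mc O_{D_3}(\pm1)$ twists cancel in the left-hand term. I then push forward by $p_{2*}$. The left and middle terms are pulled back from $C$ up to a twist by a line bundle on $D_3$, so by the K\"unneth formula their direct images are $H^i(C,-)$ tensored with the corresponding bundle on $D_3$. The numerical inputs are: $E$ is stable, hence simple, giving $h^0(C,E^\vee\otimes E)=1$; and $E^\vee$ is stable of slope $d/(r-k)$, so for $d\gg0$ Serre duality yields $H^1(C,E^\vee)\cong H^0(C,E\otimes\omega_C)^\vee=0$, the latter because $\mu(E\otimes\omega_C)<0$, whence Riemann--Roch gives $h^0(C,E^\vee)=\chi(E^\vee)=d+(r-k)(1-g)$.

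Because $H^1(C,E^\vee)=0$, the middle term has no $R^1$, and the long exact sequence collapses to the four-term exact sequence of bundles on $D_3$
\[0\to \mc O_{D_3}\to H^0(C,E^\vee)^{\oplus r}\otimes \mc O_{D_3}(1)\to p_{2*}(\mc K^\vee\otimes \mc G_3)\to H^1(C,E^\vee\otimes E)\otimes \mc O_{D_3}\to 0\,,\]
where the final term is $R^1p_{2*}p_C^*(E^\vee\otimes E)$. The first and last terms are trivial bundles of degree $0$, so by additivity of the determinant in exact sequences $\det p_{2*}(\mc K^\vee\otimes \mc G_3)\cong \mc O_{D_3}\big(r\,h^0(C,E^\vee)\big)$, which has degree $r(d+(r-k)(1-g))$, as claimed. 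The step I expect to require the most care is verifying that the two $E^\vee\otimes E$ contributions are genuinely constant along $D_3$ and hence drop out of the degree; this rests on the fact that $E$ is a fixed bundle pulled back from $C$, so that its cohomology varies in a trivial family over $D_3$.
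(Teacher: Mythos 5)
Your proposal is correct and follows essentially the same route as the paper: identify the restriction of $p_{2*}(\mc K^\vee\otimes\mc F)$ to $D_3$ with $p_{2*}$ of the dual of the kernel tensored with $\mc G_3$ via cohomology and base change, tensor the family \eqref{family D_3} by $p_C^*E^\vee\otimes p_2^*\mc O_{D_3}(1)$, push forward using $H^1(C,E^\vee)=0$ to get the four-term exact sequence, and read off the determinant as $\mc O_{D_3}(r\,h^0(C,E^\vee))=\mc O_{D_3}(r\chi(E^\vee))$. Your additional justifications (the ${\rm ext}^1$-vanishing enabling base change, and the Serre duality/stability argument for $H^1(C,E^\vee)=0$) are accurate elaborations of steps the paper leaves implicit.
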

	\begin{proof}
		Recall from \eqref{family D_3} the family parameterized by $D_3$. 
		It follows that 
		$$p_{2*}(\mc K^\vee\otimes \mc F)\vert_{D_3}=
		p_{2*}(p_C^*E^\vee\otimes  p_2^*\mc O_{D_3}(1)\otimes \mc G_3)\,.$$
		Note that as $E^\vee$ is a stable bundle of degree $d\gg0$, we have 
		$H^1(C,E^\vee)=0$.
		Tensoring \eqref{family D_3} with 
		$p_C^*E^\vee\otimes p_2^*\mc O_{D_3}(1)$ and applying $p_{2*}$ 
		yields the long exact sequence 
		\begin{align*}
			0\to {\rm Hom}(E,E)\otimes \mc O_{D_3}&\to [H^0(C,E^\vee)\otimes \mc O_{D_3}(1)]^{\oplus r}\to\\ 
			&p_{2*}(p_C^*E^\vee\otimes  p_2^*\mc O_{D_3}(1)\otimes \mc G_3)\to {\rm Ext}^1(E,E)\otimes \mc O_{D_3}\to 0\,.
		\end{align*}
		It follows that 
		\begin{align*}
			{\rm det}(p_{2*}(p_C^*E^\vee\otimes  p_2^*\mc O_{D_3}(1)\otimes \mc G_3))&\cong 
			\mc O_{D_3}(rh^0(C,E^\vee))\\
			&= \mc O_{D_3}(r\chi(E^\vee))\\
			&=\mc O_{D_3}(r(d+(r-k)(1-g)))\,.
		\end{align*}
		This completes the proof of the Lemma.
	\end{proof}
	
	We have the following Theorem, which is 
	part \ref{intro 3} of Theorem \ref{thm intro}
	
	\begin{theorem}\label{main theorem canonical}
		Assume one of the following two holds:
		\begin{itemize}
			\item $g(C)\geqslant 3$ and $2\leqslant k\leqslant r-2$, or
			\item $g(C)=2$ and $3\leqslant k\leqslant r-2$.
		\end{itemize}
		Let $d\gg0$. Let $\omega_{\mc Q_L}$ denote the canonical 
		divisor of $\mc Q_L$. Then 
		$$\omega_{\mc Q_L}=[d(r-2k-2)+r(g-1)]\alpha+(2k-r)\beta_{d+g-1}\,.$$
		In particular, $\mc Q_L$ is Fano iff $r=2k+1$. 
	\end{theorem}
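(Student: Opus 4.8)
The plan is to express $\omega_{\mc Q_L}$ in the basis $\{\alpha,\beta_{d+g-1}\}$ of $\mathrm{Pic}(\mc Q_L)$ and then read off the Fano criterion from the description of the nef cone in Theorem \ref{main theorem nef}. By the discussion preceding Lemma \ref{omega dot D_1}, the locus $\mc Q_{g,L}$ lies in the smooth locus and its complement has codimension $\geq 2$, so the canonical divisor of $\mc Q_L$ is computed on $\mc Q_{g,L}$; there, by \eqref{a4}, one has $\Omega_{\mc Q_{g,L}}=(p_{2*}(\mc K^\vee\otimes\mc F))^\vee$, and hence
\[
\omega_{\mc Q_L}=\det\bigl(p_{2*}(\mc K^\vee\otimes\mc F)\bigr)^{-1}.
\]
Writing $\omega_{\mc Q_L}=A\alpha+B\beta_{d+g-1}$ with $A,B\in\Z$, it remains to pin down $A$ and $B$ by intersecting against the two curves $D_1$ and $D_3$, both of which lie in $\mc Q_{g,L}$.

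First I would intersect with $D_1$. Since $[D_1]\cdot[\alpha]=0$ (Proposition \ref{alpha nef}) and $[D_1]\cdot[\beta_{d+g-1}]=1$ (Lemma \ref{D_2 dot alpha}), this gives $[D_1]\cdot\omega_{\mc Q_L}=B$. On the other hand this intersection number is the degree of $\det(p_{2*}(\mc K^\vee\otimes\mc F))^{-1}$ on $D_1$, which by Lemma \ref{omega dot D_1} equals $-(r-2k)$; thus $B=2k-r$. Next I would intersect with $D_3$: using $[\alpha]\cdot[D_3]=r-k$ and $[\beta_{d+g-1}]\cdot[D_3]=d(r-k-1)$ from Lemma \ref{alpha dot D_3}, together with the degree $-r(d+(r-k)(1-g))$ supplied by Lemma \ref{omega dot D_3}, I obtain
\[
A(r-k)+B\,d(r-k-1)=-r\bigl(d+(r-k)(1-g)\bigr).
\]
Substituting $B=2k-r$ and solving (a routine expansion shows the $d$-coefficient of the left side factors as $(r-2k-2)(r-k)$ and the remaining term matches $r(g-1)(r-k)$) yields $A=d(r-2k-2)+r(g-1)$, which is the asserted formula.

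For the Fano claim I would argue that $-\omega_{\mc Q_L}$ is ample if and only if it lies in the interior of the nef cone. By Theorem \ref{main theorem nef} the nef cone is $\R_{\geq0}\alpha+\R_{\geq0}\beta_{d+g-1}$ with extremal rays the (linearly independent, non-ample) classes $\alpha$ and $\beta_{d+g-1}$, so the ample cone is exactly $\R_{>0}\alpha+\R_{>0}\beta_{d+g-1}$. Writing $-\omega_{\mc Q_L}=-A\alpha-B\beta_{d+g-1}$, ampleness is therefore equivalent to $-A>0$ and $-B>0$. Now $-B=r-2k>0$ forces $r\geq 2k+1$, while for $d\gg0$ the sign of $-A=d(2k+2-r)-r(g-1)$ is governed by $2k+2-r$, so $-A>0$ forces $r\leq 2k+1$ (the boundary case $r=2k+2$ gives $-A=-r(g-1)<0$ and is excluded since $g\geq2$). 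These inequalities leave exactly $r=2k+1$.

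The genuinely substantial work is already carried out in Lemmas \ref{omega dot D_1} and \ref{omega dot D_3}, namely the explicit computation of $\det(p_{2*}(\mc K^\vee\otimes\mc F))$ restricted to $D_1$ and $D_3$ via the $\ms Ext$-sheaf resolutions of the families \eqref{family D_1} and \eqref{family D_3}. Granting those, the theorem reduces to short linear algebra. The one point demanding care is the sign bookkeeping: because the canonical bundle is the \emph{dual} of the pushforward, the degrees from the two lemmas must enter with a minus sign, and one must not conflate $\omega_{\mc Q_L}$ with $\det(p_{2*}(\mc K^\vee\otimes\mc F))$ itself. A secondary subtlety in the Fano step is that both generators of the nef cone are nef but \emph{not} ample, so it is essential to use strict positivity of both coordinates rather than mere nefness.
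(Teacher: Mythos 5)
Your proposal is correct and follows essentially the same route as the paper: reduce to $\mc Q_{g,L}$ via the codimension-$2$ complement, identify $\omega_{\mc Q_L}$ with $\det(p_{2*}(\mc K^\vee\otimes\mc F))^\vee$, pin down the coefficients by intersecting with $D_1$ and $D_3$ using Lemmas \ref{omega dot D_1}, \ref{omega dot D_3}, \ref{D_2 dot alpha} and \ref{alpha dot D_3}, and then read off the Fano condition from the signs of the coefficients for $d\gg0$. Your linear algebra (including the factorization $(r-2k-2)(r-k)$ of the $d$-coefficient) and sign bookkeeping check out, and your explicit appeal to the ample cone being the interior of the nef cone merely makes precise what the paper leaves implicit.
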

	\begin{proof}
		Let us write $\omega_{\mc Q_L}=a\alpha+b\beta_{d+g-1}$. 
		Recall $\omega_{\mc Q_L}={\rm det}(p_{2*}(\mc K^\vee\otimes \mc F))^\vee$.
		It follows from Lemma \ref{omega dot D_1} and Lemma \ref{omega dot D_3}
		that 
		\begin{align*}
			[\omega_{\mc Q_L}]\cdot [D_1]&=2k-r\,,\\
			[\omega_{\mc Q_L}]\cdot [D_3]&=-r(d+(r-k)(1-g))\,.
		\end{align*}
		The proof of Proposition \ref{alpha nef} shows that $[\alpha]\cdot [D_1]=0$.
		Using Lemma \ref{D_2 dot alpha} and Lemma \ref{alpha dot D_3}, we get
		the following two equations in $a$ and $b$
		\begin{align*}
			b&=2k-r\,,\\
			a(r-k)+bd(r-k-1)&=-r(d+(r-k)(1-g))\,.
		\end{align*}
		One easily computes that $a=d(r-2k-2)+r(g-1)$.
		Thus, 
		$$\omega_{\mc Q_L}=[d(r-2k-2)+r(g-1)]\alpha+(2k-r)\beta_{d+g-1}\,.$$
		
		For $\mc Q_L$ to be Fano, we need that $d(r-2k-2)+r(g-1)<0$ and $2k-r<0$.
		Since $d\gg0$, this happens iff $r-2k-2<0$ and $2k-r<0$, that is, 
		iff $2k<r<2k+2$, that is, iff $r=2k+1$. 
	\end{proof}

	This completes the proof of Theorem \ref{thm intro}.


\end{document}